\renewcommand{\epsilon}{\varepsilon}
\pgfplotsset{compat=newest}
\pgfplotsset{compat=1.17}
\newcommand*{\N}{\mathbb N}
\newcommand*{\Z}{\mathbb Z}
\newcommand*{\Q}{\mathbb Q}
\newcommand*{\F}{\mathbb F}
\newcommand{\progr}{\mathcal{A}}
\newcommand{\graph}{\mathcal{G}}
\newcommand{\edge}[1]{E(#1)} 
\newcommand{\vertex}[1]{V(#1)} 
\newcommand{\vertexemph}{V}
\newcommand{\tree}{\mathcal{T}}
\newcommand{\aut}{\operatorname{Aut}}
\newcommand{\bassserre}{\hat{T}}
\newcommand{\fin}{\operatorname{F}}
\newcommand{\FP}{\operatorname{FP}}
\newcommand{\Sym}{\operatorname{Sym}}
\newcommand{\universal}{\mathcal{U}}
\newcommand{\id}{\operatorname{id}}
\newcommand{\SL}{\operatorname{SL}}
\newcommand{\CW}{\operatorname{CW}}
\newtheorem{thm}{Theorem}[section]
\newtheorem{cor}[thm]{Corollary}
\newtheorem{prop}[thm]{Proposition}
\theoremstyle{definition}
\newtheorem{defi}[thm]{Definition}
\newtheorem{ex}[thm]{Example}
\newtheorem{rem}[thm]{Remark}
\title[Simple tdlc groups separated by finiteness properties]{Simple totally disconnected locally compact groups separated by finiteness properties}
\author{Laura Bonn}
\address{Karlsruhe Institute of Technology, Germany}
\curraddr{}
\email{}
\thanks{The first author received support from the Deutsche Forschungsgemeinschaft (DFG, German Research Foundation) – Project number 281869850.}
\author{Sebastian Giersbach}
\address{Justus Liebig University Giessen, Germany}
\curraddr{}
\email{sebastian.giersbach@uni-giessen.de}
\thanks{The second author received support from the DFG Heisenberg project WI 4079/6.}
\subjclass[2020]{Primary 22D05; Secondary 20E06, 20E08, 20E32, 20F05, 20F65}
\date{February 2025}
\begin{document}

\begin{abstract}
    We construct a sequence of simple non-discrete totally disconnected locally compact (tdlc) groups separated by finiteness properties; that is, for every positive integer $n$ there exists a simple non-discrete tdlc group that is of type $\fin_{n-1}$ but not of type $\fin_n$. This generalizes a result for discrete groups of Skipper--Witzel--Zaremsky. Furthermore, we construct a simple non-discrete tdlc group that is of type $\FP_2$ over $\Z$ but not compactly presented. Our examples arise as Smith universal groups $\universal(M, N)$ associated to permutation groups $M$ and $N$. We generalize a theorem of Haglund--Wise to tdlc groups and show that under mild conditions on $M$ and $N$ the finiteness properties of $\universal(M, N)$ reflect those of its local actions $M$ and $N$.
\end{abstract}

\maketitle

\section{Introduction}
In the class of discrete groups, the finiteness properties $\fin_n$ and $\FP_n$ generalize the notions of finite generation and finite presentability. A group is of type $\fin_1$ or $\FP_1$ if and only if it is finitely generated, and being finitely presented is equivalent to being of type $\fin_2$. The property $\fin_n$ implies $\FP_n$ over any commutative ring and for finitely presented groups the two notions coincide over $\Z$. However, Bestvina and Brady \cite{bestvina+brady} showed that this equivalence fails for groups that are not finitely presented. More recently, Skipper, Witzel and Zaremsky \cite{skipper+witzel+zaremsky} constructed the first sequence of simple groups separated by finiteness properties, that is, they constructed simple groups $G_n$ that are of type $\fin_{n-1}$ but not of type $\fin_n$.

Finiteness properties were first generalized to locally compact groups by Abels and Tiemeyer \cite{abels+tiemeyer}. In the setting of totally disconnected locally compact (tdlc) groups, Castellano and Corob Cook \cite{castellano+cook} subsequently gave a different but equivalent definition, which we will use throughout this paper. In the discrete case, these generalizations recover the classical finiteness properties.

In this paper, we construct further examples of non-discrete tdlc groups separated by finiteness properties. In particular, we extend the result of Skipper, Witzel and Zaremsky to the tdlc setting.

\begin{thm}\label{thm: simple non-discrete tdlc groups separated by finiteness properties}
    For every positive integer $n$ there exists a simple non-discrete tdlc group that is of type $\fin_{n-1}$ but not of type $\fin_n$.
\end{thm}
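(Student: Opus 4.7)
The strategy is to feed the Skipper--Witzel--Zaremsky discrete simple groups $G_n$ into Smith's construction and invoke the transfer theorem for finiteness properties announced in the abstract. For fixed $n$, let $G_n$ be the simple discrete group of type $\fin_{n-1}$ but not $\FP_n$ of \cite{skipper+witzel+zaremsky}, equipped with a faithful transitive permutation action on a suitable set to serve as the local group $M$. Pair it with a second permutation group $N$ (for instance a finite transitive group of small degree with non-trivial point stabiliser) chosen so that $N$ contributes no finiteness obstruction and so that the joint hypotheses of the transfer and simplicity theorems are met.

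Next, I would verify that $\universal(M,N)$ is a simple non-discrete tdlc group. The tdlc and non-discreteness conditions reduce to standard closure and non-triviality requirements on the point stabilisers of $M$ and $N$. Simplicity follows from a Burger--Mozes/Smith-style argument: under the mild hypothesis that $M$ and $N$ are transitive and generated by their point stabilisers, the subgroup $\universal(M,N)^+$ is abstractly simple; one arranges $M$ and $N$ so that this subgroup coincides with $\universal(M,N)$, or otherwise passes to it while keeping the same finiteness type.

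Finally, the transfer theorem yields that $\universal(M,N)$ is of type $\fin_{n-1}$ (since both $G_n$ and $N$ are) and fails $\FP_n$ (since $G_n$ does). The main obstacle I anticipate lies in the compatibility of the two roles that the groups $G_n$ must play simultaneously: as permutation groups they must meet the transitivity, generation and compactness requirements needed by Smith's construction and by the transfer theorem, while as abstract groups they must retain their exact finiteness type $\fin_{n-1}$ but not $\FP_n$. Since the $G_n$ are Thompson-like and come with several natural actions (on rooted trees, Cantor sets, or cube complexes), the work will lie in selecting an action that makes all the hypotheses compatible without disturbing the finiteness properties, together with a careful check that the transfer theorem from the main technical part of the paper actually applies in this regime.
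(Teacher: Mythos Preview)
Your proposal is correct and coincides with one of the two routes the paper actually records (in the closing Remark of \S4): take $M=G_n$ from \cite{skipper+witzel+zaremsky}, $N=\Sym(3)$, and apply \cref{main thm: smith finite} together with \cref{prop: smith properties}. The obstacle you flag---finding a permutation action of $G_n$ that is transitive, has finite point stabilisers, and is generated by them---dissolves almost for free once you use the simplicity of $G_n$: let $Q\le G_n$ be any nontrivial finite subgroup and let $G_n$ act on $X=G_n/Q$. Then the kernel $\bigcap_m mQm^{-1}$ and the subgroup generated by all $mQm^{-1}$ are both normal in $G_n$, hence trivial and all of $G_n$ respectively; the point stabilisers are exactly the finite conjugates of $Q$, so the action is closed with compact stabilisers. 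No special feature of the Thompson-like actions is needed, only the existence of torsion, which these groups have.

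The paper's \emph{primary} construction is different: instead of importing the SWZ groups, it builds $M=H_L\rtimes\aut(L)$ from a Bestvina--Brady group $H_L$ with $L$ the boundary of the $(n{+}1)$-cross-polytope, acting on $M/\aut(L)$. This is more self-contained (it avoids the heavy SWZ machinery and reduces everything to the Bestvina--Brady Main Theorem), and the same framework later yields the $\FP_2$-but-not-compactly-presented example of \cref{thm: simple non-discrete tdlc group of type FP_2 not compactly presented}. Your route is shorter given SWZ as a black box and covers all $n\ge1$ uniformly, whereas the cross-polytope example only directly handles $n\ge3$.
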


In \cite{castellano+weigel}*{Question 2}, Castellano and Weigel asked whether there exists a non-discrete tdlc group with trivial quasi-center that is of type $\FP_2$ over $\Q$ but is not compactly presented. We answer this question in the affirmative and, in fact, prove a stronger statement.

\begin{thm}\label{thm: simple non-discrete tdlc group of type FP_2 not compactly presented}
    There exists a simple non-discrete tdlc group that is of type $\FP_2$ over $\Z$ but is not compactly presented.
\end{thm}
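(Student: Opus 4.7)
The plan is to apply the main transfer theorem described in the abstract: under mild conditions on the permutation groups $M$ and $N$, the finiteness properties of the Smith group $\universal(M,N)$ reflect those of its local groups $M$ and $N$, and vice versa. In the natural tdlc translation, compact presentability of $\universal(M,N)$ should correspond to finite presentability of the local groups. It therefore suffices to exhibit a pair $(M,N)$ satisfying the hypotheses of the transfer theorem such that at least one local group is of type $\FP_2$ but not finitely presented, the other is at least $\FP_2$, and the resulting Smith group is simple and non-discrete.

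For the non-finitely-presented local group I would take a Bestvina--Brady group $H_L$, where $L$ is a flag complex that is $2$-acyclic over $\Z$ but not simply connected. By \cite{bestvina+brady}, such $H_L$ is of type $\FP_2$ but not of type $\fin_2$, and hence not finitely presented. To place $H_L$ in the Smith construction it must be realized as a closed permutation group on a countable set; natural candidates are the regular action of $H_L$ on itself, or an action on a coset space of an appropriate subgroup compatible with the right-angled Artin cube complex structure, chosen so as to satisfy the hypotheses of the transfer theorem. The other local group $M$ can be taken to be a finite permutation group with strong transitivity properties (for instance a finite $2$-transitive group).

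Simplicity and non-discreteness of $\universal(M,N)$ should then follow from a Burger--Mozes-style criterion adapted to Smith groups: assuming $M$ and $N$ satisfy Tits' independence property \textbf{(P)} and are generated by point stabilizers together with a suitable transitivity hypothesis, one deduces that $\universal(M,N)$ is abstractly simple, and non-discreteness is automatic once the local actions are non-free. Applying the transfer theorem to $(M,N)$ with $N=H_L$ then gives that $\universal(M,N)$ is of type $\FP_2$, inherited from both local groups, but not compactly presented, since $H_L$ is not finitely presented. This simultaneously settles Question 2 of Castellano--Weigel \cite{castellano+weigel} in the affirmative.

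The main obstacle I expect is matching the ``mild conditions'' of the transfer theorem with the Bestvina--Brady construction. These conditions are likely to impose more than abstract $\FP_2$-ness on the permutation action: transitivity, closedness of point stabilizers, and perhaps a separability or semi-primitivity hypothesis reminiscent of the Haglund--Wise framework mentioned in the abstract. The regular action of $H_L$ on itself may not satisfy all of these. The crux of the argument is therefore to choose $L$, and a realization of $H_L$ as a permutation group (most likely via an action on the cosets of a carefully chosen subgroup arising from the special cube complex structure of the Salvetti complex of the ambient RAAG), so that the transfer theorem applies, the Burger--Mozes-style simplicity criterion is satisfied, and the $\FP_2$-but-not-$\fin_2$ character of $H_L$ is preserved, all at once.
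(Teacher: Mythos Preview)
Your overall strategy matches the paper's: use the transfer theorem (\cref{main thm: smith finite}) together with a Bestvina--Brady group $H_L$ for $L$ acyclic but not simply connected, and take the other local group to be a finite transitive group such as $\Sym(3)$. However, there is a genuine gap in your plan for realizing $H_L$ as a permutation group satisfying the simplicity criterion. The relevant criterion (\cref{prop: smith properties}(4)) requires both local groups to be generated by their point stabilizers. Since $H_L$ is a subgroup of a right-angled Artin group, it is torsion-free; hence for the action of $H_L$ on any coset space $H_L/K$ to have compact (i.e.\ finite) point stabilizers one must take $K$ finite, forcing $K=\{1\}$. The action is then free, the point stabilizers are trivial, and $H_L$ cannot be generated by them. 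Neither the regular action nor any coset action coming from the Salvetti complex structure can work.

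The paper's resolution is to enlarge $H_L$ to $M = H_L \rtimes Q$ with $Q=\aut(L)$ finite, and let $M$ act on $M/Q$. This preserves the finiteness properties (finite index) while introducing a genuine finite subgroup whose conjugates serve as point stabilizers. One then needs two nontrivial facts: that the conjugates of $Q$ intersect trivially (so the action is faithful; this follows from torsion-freeness of $H_L$ and faithfulness of the $Q$-action, \cref{prop: semidirect product conjugates intersect trivially}), and that the conjugates of $Q$ generate all of $M$ (\cref{prop: bestvina brady conjugates generate everything}). The latter imposes strong symmetry constraints on $L$: every edge must lie in a triangle and $\aut(L)$ must act transitively on oriented edges. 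Finding a flag complex $L$ that is simultaneously acyclic, not simply connected, and this highly symmetric is a separate nontrivial step; the paper produces one explicitly via a coset geometry over $\operatorname{PSL}_2(13)\times C_3\times C_3$, verified with GAP (\cref{ex: simple non-discrete tdlc group FP_2 not compactly presented}).
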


Using different methods, Llosa Isenrich, Schesler and Wu \cite{llosaisenrich+schesler+wu} have recently proven the analogous result for discrete groups, i.e.\ they constructed a simple group that is of type $\FP_2$ over $\Z$ (in fact, of type $\FP_\infty$) but is not finitely presented.

Our examples arise from a construction introduced by Smith \cite{smith}. The Smith universal group $\universal(M, N)$ acts on a biregular tree with prescribed local actions $M$ and $N$. This construction generalizes the Burger--Mozes universal groups $\universal(F)$ \cite{burger+mozes}, which act on a regular tree with prescribed local action $F$. Under suitable conditions on $M$ and $N$, the resulting Smith group $\universal(M, N)$ is a simple non-discrete tdlc group. Using this construction, Smith obtained uncountably many pairwise non-isomorphic simple non-discrete tdlc groups that are all compactly generated.

In the main theorem we show that if the permutation groups $M$ and $N$ both have finitely many orbits and one of them is transitive, the finiteness properties of the Smith group $\universal(M, N)$ relate to those of its local actions $M$ and $N$. For the proof we use that under these conditions, the Smith group $\universal(M, N)$ splits as a finite graph of groups. For such groups, we prove the following.

\begin{thm}\label{main thm: finiteness properties of graph of groups}
    Let $G$ be a tdlc group which splits as a finite graph of groups $(\graph, \progr)$.

    \begin{enumerate}
        \item Suppose for each edge $e \in E(\graph)$ the edge group $\progr_e$ is of type $\FP_n$ over $\Z$. Then $G$ is of type $\FP_n$ over $\Z$ if and only if for each vertex $v \in V(\graph)$ the vertex group $\progr_v$ is of type $\FP_n$ over $\Z$.

        \item Suppose for each edge $e \in E(\graph)$ the edge group $\progr_e$ is of type $\fin_n$. Then $G$ is of type $\fin_n$ if and only if for each vertex $v \in V(\graph)$ the vertex group $\progr_v$ is of type $\fin_n$.
    \end{enumerate}
\end{thm}

This generalizes a result by Haglund and Wise \cite{haglund+wise} to tdlc groups. In terms of the Smith group $\universal(M, N)$ \cref{main thm: finiteness properties of graph of groups} yields the following.

\begin{cor}\label{main thm: smith finite}
    Let $M \leq \Sym(X)$, $N\leq \Sym(Y)$ be non-trivial closed subgroups with compact point stabilizers. 
    Suppose that both have only finitely many orbits and that one of them acts transitively. Let $G \coloneqq \universal(M, N)$. Then:
    \begin{enumerate}
        \item $G$ is of type $\FP_n$ over $\Z$ if and only if $M$ and $N$ are of type $\FP_n$ over $\Z$,
        \item $G$ is of type $\fin_n$ if and only if $M$ and $N$ are of type $\fin_n$.
    \end{enumerate}
\end{cor}

This paper is structured as follows. In \cref{sec: smith} we recall the definition of a Smith group $\universal(M, N)$ and its basic properties. In \cref{sec: finiteness} we prove \cref{main thm: finiteness properties of graph of groups}, and in \cref{sec: examples} we apply \cref{main thm: smith finite} to construct examples that prove \cref{thm: simple non-discrete tdlc groups separated by finiteness properties} and \cref{thm: simple non-discrete tdlc group of type FP_2 not compactly presented}.

\section{Smith groups}\label{sec: smith}
Let $X$ and $Y$ be two non-empty disjoint sets, each containing at least two elements. 
Let $M \leq \Sym(X)$ and $N\leq \Sym(Y)$ be two permutation groups. 
Consider a connected biregular tree $\tree$ with bipartition $\vertex{\tree}=\vertexemph_X \cup \vertexemph_Y$ of its vertices. Suppose that all vertices in $\vertexemph_X$ have degree $\vert X \vert$ and that all vertices in $\vertexemph_Y$ have degree $\vert Y \vert$. 
Then $\tree$ is a $(\vert X \vert, \vert Y \vert)$-biregular tree.

We take $\tree$ as an oriented tree, where each edge exists in both directions. For a vertex $v \in \vertex{\tree}$ we define $o(v) = \{e \in \edge{\tree} : o(e) = v\}$ as the set of all edges originating in $v$. Similarly we define $t(v) = \{e \in \edge{\tree} : t(e) = v\}$ as the set of all edges terminating in $v$. A function $l \colon \edge{\tree} \to X \cup Y$ is called a \textit{legal labeling} if
\begin{enumerate}
    \item for all $v \in \vertexemph_X$, $l\vert_{ o(v)} \colon o(v) \to X$ is a bijection,
    \item for all $w \in \vertexemph_Y$, $l\vert_{ o(w)} \colon o(w) \to Y$ is a bijection and
    \item for all $v \in \vertex{\tree}$, $l\vert_{ t(v)}$ is constant. 
\end{enumerate}

\begin{figure}[h]
   \centering
   \begin{tikzpicture}[scale=0.7, level distance=1.5cm,
            every node/.style={shape=circle,fill=black,circle,inner sep=1.5pt}
            ]
            \node(a) {}
                child [grow=0, line width=0.5mm, level distance=1cm , ForestGreen] {node(b) {} 
                    child {node(c) {}
                        child [grow=330, YellowOrange, level distance=1.5cm]{node(d){}} 
                        child [grow=30, cyan, level distance=1.5cm]{node(e){}} }}
                child [grow= 130, YellowOrange, level distance=1cm, line width=0.5mm] {node(f){}
                    child {node(g){}
                        child [grow=100, ForestGreen, level distance=1.5cm] {node(h){}}
                        child [grow=160, cyan, level distance=1.5cm]{node(i){}} }}
                child [grow=240, cyan, level distance=1cm, line width=0.5mm] {node(j){}
                    child {node(k){}
                        child[grow=210, ForestGreen, level distance=1.5cm] {node(l){}}
                        child[grow=270, YellowOrange, level distance=1.5cm] {node(m){}} }};
            \draw[every node/.style={font=\tiny},every edge/.style={draw=none}] 
                (a) edge["1"] (b)
                (b) edge["2"] (c)
                (c) edge["2"] (d)
                (c) edge["2"] (e)
                (a) edge["1"] (f)
                (f) edge["2"] (g)
                (g) edge["2"] (h)
                (g) edge["2"] (i)
                (a) edge["1"] (j)
                (j) edge["2"] (k)
                (k) edge["2"] (l)
                (k) edge["2"] (m);
        \end{tikzpicture}
   \caption{A legal labeling of the biregular tree $\tree_{2,3}$ on a ball of radius $3$, with $X=\{\text{1, 2}\}$ and $Y=\{\text{blue, green, orange}\}$.}
\end{figure}
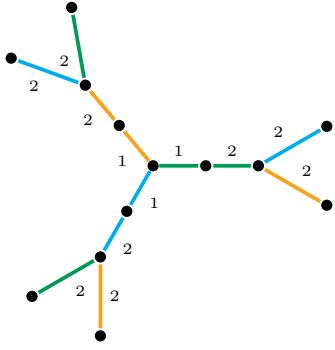

Denote the set of elements of $\aut(\tree)$ which fix $\vertexemph_X$ setwise by $\aut(\tree)_{\{\vertexemph_X\}}$. 
This set also fixes the set $\vertexemph_Y$, therefore $\aut(\tree)_{\{\vertexemph_X\}}=\aut(\tree)_{\{\vertexemph_Y\}}$.
Let $l$ be a legal labeling of $\tree$. For $v \in \vertexemph_X$ we define the bijection 
\begin{align*}
    l_v \colon o(v) \to X, \, e \mapsto l(e),
\end{align*}
and for $w \in \vertexemph_Y$
\begin{align*}
    l_w \colon o(w) \to Y, \, e \mapsto l(e).
\end{align*}
We get the maps into the permutation groups
\begin{align*}
    c_X \colon \aut(\tree)_{\{\vertexemph_X\}} \times \vertexemph_X \to \Sym(X), \, (g,v) \mapsto l_{g(v)} \circ g \circ l_v^{-1},\\
    c_Y \colon \aut(\tree)_{\{\vertexemph_Y\}}\times \vertexemph_Y \to \Sym(Y), \, (g,w) \mapsto l_{g(w)} \circ g \circ l_w^{-1}.
\end{align*}

\begin{defi}
    Let $X$ and $Y$ be two disjoint sets with at least two elements and let $M \leq \Sym(X)$ and $N\leq \Sym(Y)$ be two permutation groups. 
    Let $\tree$ be the $(\vert X \vert, \vert Y \vert)$-biregular tree and let $l$ be a legal labeling of $\tree$. 
    Then
    \begin{align*}
        \universal^l(M,N) \coloneqq \{ g \in \aut(\tree)_{\{\vertexemph_X\}} \mid \forall v \in \vertexemph_X \colon c_X(g,v) \in M \text{ and } \forall w \in \vertexemph_Y \colon c_Y(g,w) \in N\}
    \end{align*}
    is the \textit{Smith universal group} with local actions $M$ and $N$. 
\end{defi}
We directly see in this definition, that this construction is symmetric, hence we get $\universal^l(M,N)=\universal^l(N,M)$. By \cite{smith}*{Proposition 11}, for two different legal labelings $l$ and $l'$ the Smith groups $\universal^l(M, N)$ and $\universal^{l'}(M, N)$ are conjugate in $\aut(\tree)$. Hence, we will omit the legal labeling $l$ in our notation and simply write $\universal(M, N)$.

We will endow the Smith group $\universal(M, N)$ and, more generally, any permutation group with a topology. Let $Z$ be a non-empty set. The \textit{permutation topology}, also known as the \textit{topology of pointwise convergence}, on $\Sym(Z)$ has the pointwise stabilizers of finite subsets of $Z$ as a basis of identity neighborhoods. Under this topology any subgroup $G \le \Sym(Z)$ is a totally disconnected Hausdorff group. $G$ is open if and only if it contains the pointwise stabilizer of some finite subset of $Z$. $G$ is closed if and only if its pointwise stabilizer of some finite subset of $Z$ is closed. In particular, if the point stabilizer $G_z$ is finite for some $z \in Z$, then $G$ is closed.

As $\universal(M, N)$ is a subgroup of $\Sym(V(\tree))$, we can endow it with the permutation topology. The topological properties of $\universal(M, N)$ depend on those of $M$ and $N$:

\begin{prop}[\cite{smith}, Theorem 1 and Theorem 30]\label{prop: smith properties}
    Let $M \le \Sym(X)$ and $N \le \Sym(Y)$ be non-trivial permutation groups. Then the following hold.
    \begin{enumerate}
        \item If $M$ and $N$ are closed, then $\universal(M, N)$ is closed.
        \item Suppose $M$ and $N$ are closed. Then $\universal(M, N)$ is locally compact if and only if all edge stabilizers of $\universal(M,N)$ are compact if and only if all point stabilizers of $M$ and of $N$ are compact.
        \item $\universal(M, N)$ is discrete if and only if $M$ and $N$ act freely.
        \item Suppose $M$ and $N$ are generated by point stabilizers. Then $\universal(M, N)$ is simple if and only if $M$ or $N$ is transitive.
    \end{enumerate}
\end{prop}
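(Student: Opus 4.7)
The plan is to verify each of the four claims in turn, exploiting the fact that $\universal(M,N)$ is cut out by local conditions on the biregular tree $\tree$. Parts (1) and (3) are direct. For (1), the local action map $g \mapsto c_X(g, v)$ is continuous from $\aut(\tree)_{\{\vertexemph_X\}}$ to $\Sym(X)$ with the permutation topology, so $c_X(\cdot, v)^{-1}(M)$ is closed whenever $M$ is, and the same holds for $N$ at $v \in \vertexemph_Y$; then $\universal(M,N)$ is an intersection of these closed preimages (together with the closed subgroup $\aut(\tree)_{\{\vertexemph_X\}}$), hence closed. For (3), discreteness means that some pointwise stabilizer of a finite vertex set is trivial; if $M$ and $N$ act freely, a straightforward induction on the radius $n$ shows that the pointwise stabilizer of a ball of radius $n$ in $\universal(M,N)$ is trivial, so the group is discrete. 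Conversely, any nontrivial $m \in M_x$ can be used to build a nontrivial element of $\universal(M,N)$ acting as $m$ at a chosen $v \in \vertexemph_X$ and as the identity on the half-tree across the edge at $v$ labeled $x$; this element lies in every vertex stabilizer and hence witnesses non-discreteness.

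For (2), edge stabilizers in $\aut(\tree)$ are open in the permutation topology, so a closed subgroup is locally compact if and only if some edge stabilizer is compact. I would describe an edge stabilizer in $\universal(M,N)$ explicitly: fixing an edge $e$ with endpoints $v \in \vertexemph_X$, $w \in \vertexemph_Y$ and label $x = l(e)$, an element of the edge stabilizer acts at $v$ by an element of $M_x$ and at $w$ by an element of $N_{l(\bar{e})}$, and the construction recurses on the resulting shells. This exhibits the edge stabilizer as an inverse limit built from finite direct products of point stabilizers in $M$ and $N$, so compactness of all such point stabilizers yields compactness of every edge stabilizer by Tychonoff. The converse follows by projecting a compact edge stabilizer onto the relevant factors, so the three stated equivalences chain together.

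For (4), this is the deepest part, and I would model the proof on Tits' simplicity theorem for tree automorphism groups. The key structural facts to verify are: by construction the pointwise stabilizer of an edge in $\universal(M,N)$ factors as the direct product of the pointwise stabilizers of the two complementary half-trees (Tits' independence property), the action on $\tree$ is without inversion, if $M$ or $N$ is transitive then $\universal(M,N)$ acts minimally on $\tree$, and the hypothesis that $M$ and $N$ are generated by point stabilizers rules out the fixation of an end. Tits' theorem then implies that every nontrivial normal subgroup of $\universal(M,N)$ contains the subgroup generated by pointwise stabilizers of edges, and under our assumptions this latter subgroup is all of $\universal(M,N)$, giving simplicity. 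For the converse, if neither $M$ nor $N$ is transitive, the orbit partitions of $X$ and $Y$ give rise to proper $\universal(M,N)$-invariant subgraphs of $\tree$ and hence proper nontrivial normal subgroups. The main obstacle is precisely this last part: parts (1)--(3) are topological bookkeeping, but (4) requires verifying the full list of hypotheses of Tits' theorem for Smith groups and then applying its conclusion.
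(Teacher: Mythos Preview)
The paper does not give its own proof of this proposition: it is stated with a citation to \cite{smith} (Theorem~1 and Theorem~30) and used as a black box thereafter. So there is no argument in the paper to compare your proposal against.

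That said, your outline is broadly the standard route and matches what one finds in Smith's paper. Parts (1) and (2) are handled there exactly as you describe: continuity of the local action maps gives closedness as an intersection of closed preimages, and the edge stabilizer is identified with an iterated wreath-type inverse limit built from point stabilizers in $M$ and $N$, so Tychonoff gives compactness. Part (4) is indeed proved via Tits' independence property and the associated simplicity theorem, and your list of hypotheses to verify (independence, minimality, no fixed end, and that the group coincides with the subgroup generated by edge fixators) is the right one.

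One genuine slip: in your argument for the converse in (3) you write that the nontrivial element you build ``lies in every vertex stabilizer''. That cannot hold for a nontrivial tree automorphism. What you need (and what your construction actually gives) is that for every \emph{finite} set $F$ of vertices there is a nontrivial element of $\universal(M,N)$ fixing $F$ pointwise: choose $v\in V_X$ so that $F$ lies entirely in the half-tree across the edge at $v$ labeled $x$, and let your element act as $m$ on the other half-tree and trivially on the half-tree containing $F$. This shows every basic identity neighborhood is nontrivial, hence non-discreteness. With that correction your sketch is sound.
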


\section{Finiteness properties}\label{sec: finiteness}

Before presenting the proofs of \cref{main thm: finiteness properties of graph of groups} and \cref{main thm: smith finite}, we briefly review the definition of finiteness properties of tdlc groups.
For further details we refer the reader to Castellano and Corob Cook \cite{castellano+cook}.

\begin{defi}
    Let $G$ be a tdlc group and $R$ be a commutative ring. 
    Then we call $G$ 
    \begin{itemize}
        \item of \emph{type $\fin_n$} if there exists a contractible proper discrete $G$-$\CW$ complex $X$, such that the $n$-skeleton $X^n$ is finite mod $G$.
        \item of \emph{type $\fin_{\infty}$} if $G$ is of type $\fin_n$ for every $n \in \N$.
        \item of \emph{type $\FP_n$ over $R$} if there exists a proper discrete resolution $P_* \to R$ of the trivial $R[G]$-module $R$, such that $P_0, \dots, P_n$ are finitely generated.
        \item of \emph{type $\FP_{\infty}$ over $R$} if $G$ is of type $\FP_n$ over $R$ for every $n \in \N$.
    \end{itemize}
\end{defi}

In contrast to finiteness properties of discrete groups, the resolution used to define the property $\FP_n$ is not projective. Therefore, we cannot use some statements about projective resolutions. However, many fundamental properties carry over to the tdlc setting.

\begin{prop}[\protect{\cite{castellano+cook}}]\label{prop: basic properties of finiteness properties}
    Let $G$ be a tdlc group and let $R$ be a commutative ring. 
    \begin{enumerate}
        \item $G$ is of type $\fin_1$ if and only if $G$ is compactly generated.
        \item $G$ is of type $\fin_2$ if and only if $G$ is compactly presented.
        \item If $G$ is of type $\fin_n$, then $G$ is of type $\FP_n$ over $R$. 
        \item If $G$ is compactly presented, then $G$ is of type $\fin_n$ if and only if $G$ is of type $\FP_n$ over $\Z$.
        \item Let $1 \to H \to G \to \factortext{G}{H} \to 1$ be a short exact sequence of tdlc groups. 
        Then the following statements hold over $R$:
        \begin{itemize}
            \item If $H$ is of type $\FP_n$ and $\factortext{G}{H}$ is of type $\FP_n$, then $G$ is of type $\FP_n$.
            \item If $H$ is of type $\FP_{n-1}$ and $G$ is of type $\FP_n$, then $\factortext{G}{H}$ is of type $\FP_n$.
        \end{itemize}
        The same implications hold with $\fin_n$ and $\fin_{n-1}$ in place of $\FP_n$ and $\FP_{n-1}$.
    \end{enumerate}
\end{prop}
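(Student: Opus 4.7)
The plan is to handle each bullet separately, adapting the classical discrete arguments to the tdlc framework of Castellano and Corob Cook in which ``proper discrete'' replaces ``free.'' The crucial observation throughout is that a proper discrete action of a tdlc group $G$ on a CW complex has compact open cell stabilizers, so ``finite mod $G$'' translates to ``$G$ acts cocompactly on the skeleton with compact open stabilizers,'' and the cellular chain modules are permutation modules of the form $R[G/K]$ for compact open $K$, which are exactly the discrete permutation modules used in the definition of $\FP_n$.

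For the first two bullets, a proper discrete $G$-CW complex with finite $1$-skeleton mod $G$ yields a compact generating set by a \v{S}varc--Milnor style argument: picking a vertex $v$, the set $\{g \in G : g \cdot v \text{ is adjacent to } v\}$ is compact and generates $G$. Conversely, any compact generating set produces a Cayley--Abels graph on which $G$ acts properly and cocompactly. The $\fin_2$ statement is analogous: finitely many $G$-orbits of $2$-cells correspond to finitely many relations modulo a compact set, i.e.\ compact presentation. The implication $\fin_n \Rightarrow \FP_n$ then comes from the cellular chain complex, since each $C_i(X;R)$ with $i \leq n$ is a finitely generated discrete permutation module, and in the simply-connected setting this gives a partial resolution of $R$. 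The partial converse under compact presentation follows by the classical induction: start with the $2$-dimensional model coming from a compact presentation and inductively attach finitely many $G$-orbits of higher cells, using that the homology obstructions are finitely generated over $R[G]$ by the $\FP_n$ hypothesis.

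The short exact sequence part is where I expect the main obstacle. The natural approach is a Lyndon--Hochschild--Serre style spectral sequence in the tdlc setting: given discrete proper resolutions of $R$ over $R[H]$ and over $R[G/H]$, one assembles a double complex resolving $R$ over $R[G]$ and reads off finite generation in each degree from the $E^2$-page. The technical difficulty is that the resolutions must remain discrete and proper throughout, so one must verify that the relevant induction/coinduction and tensor operations preserve these properties and preserve finite generation up to dimension $n$. The downward implication, deducing $\FP_n$ of $\factortext{G}{H}$ from $\FP_{n-1}$ of $H$ and $\FP_n$ of $G$, then follows by a standard dimension-shifting argument on the long exact sequence in discrete homology. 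The corresponding $\fin_n$ statements are obtained by lifting these chain-level constructions to equivariant CW models, which is routine once the $\FP_n$ case is established and one knows that compact presentation is preserved under extensions.
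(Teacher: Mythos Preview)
The paper does not prove this proposition at all. It is stated with the citation \texttt{[Castellano--Corob Cook]} in the header and is used as a black box throughout; there is no accompanying proof in the paper to compare your attempt against. So strictly speaking the comparison you are asking for is vacuous.

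That said, your sketch is broadly the right shape and matches what is actually done in Castellano--Corob Cook, so let me comment on the content. The first four bullets are standard and your outline is fine: Cayley--Abels graphs for $\fin_1$, the Castellano--Weigel presentation complex for $\fin_2$, cellular chains for $\fin_n \Rightarrow \FP_n$, and Wall-style cell attachment for the partial converse. One warning: in the tdlc setting you must keep track of the fact that the relevant permutation modules $R[G/K]$ are not projective in general, only ``proper discrete,'' so the usual dimension-shifting tricks need the specific homological algebra developed in Castellano--Corob Cook (their Corollary~3.12 and Corollary~3.18, which the present paper invokes elsewhere). Your phrase ``routine once the $\FP_n$ case is established'' for passing from $\FP_n$ to $\fin_n$ in the extension statement is too optimistic; this step genuinely requires the equivalence under compact presentation (fourth bullet) together with the fact that compact presentation is preserved under extensions, and the latter is itself a nontrivial input (it is in Cornulier--de la Harpe). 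If you intend to supply a self-contained proof rather than a citation, that passage needs to be spelled out.
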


As a first step, we show that the finiteness properties of the local actions $M\leq \Sym(X)$ and $N \leq \Sym(Y)$ relate to those of point stabilizers of $G$.

\begin{thm}[\cite{bonn}, Theorem 5.4.2]\label{thm: M N and point stabilizer}
    Let $M\leq \Sym(X)$ and $N \leq \Sym(Y)$ be non-trivial closed permutation groups with compact point stabilizers. 
    Let $G \coloneqq\universal(M,N)$ and let $R$ be a commutative ring.
    Then 
    \begin{enumerate}
        \item $M$ and $N$ are of type $\fin_n$ if and only if all point stabilizers of $G$ are of type $\fin_n$;
        \item $M$ and $N$ are of type $\FP_n$ over $R$ if and only if all point stabilizers of $G$ are of type $\FP_n$ over $R$. 
    \end{enumerate}
\end{thm}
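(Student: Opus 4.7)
The plan is to reduce the statement to an analysis of vertex stabilizers and to exhibit each such stabilizer as a compact-by-($M$ or $N$) extension. Since $G$ acts on $\tree$, its point stabilizers are vertex stabilizers and edge stabilizers; the edge stabilizers are compact by \cref{prop: smith properties}(2) and hence automatically of type $\fin_\infty$ and $\FP_\infty$ over any commutative ring, so only the vertex stabilizers carry information. I would therefore fix a vertex $v \in \vertexemph_X$ (the case $v \in \vertexemph_Y$ being symmetric) and construct a short exact sequence of tdlc groups
\[
1 \longrightarrow P_v \longrightarrow G_v \xrightarrow{\; c_X(\,\cdot\,,v)\;} M \longrightarrow 1,
\]
where $P_v$ denotes the pointwise stabilizer in $G$ of the closed ball of radius $1$ around $v$.

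The key steps, in order, are as follows. First, I would verify that $c_X(\,\cdot\,,v)$ is a continuous, open, surjective homomorphism onto $M$: continuity and openness follow directly from the definition of the permutation topology, since basic identity neighborhoods on both sides are pointwise stabilizers of finite sets and correspond naturally under the legal labeling, while surjectivity encodes the universality of the Smith construction, namely that every element of $M$ is realized as the local action at $v$ of some element of $\universal(M,N)$. Second, I would show that the kernel $P_v$ is compact. An element of $P_v$ fixes $v$ together with each of its neighbors, hence is determined by its restrictions to the half-trees hanging off those neighbors, and on each such half-tree it lies in the stabilizer of an edge of $\tree$, which is compact by hypothesis. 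This identifies $P_v$ with a closed subgroup of a product of compact edge stabilizers, which is itself compact by Tychonoff's theorem.

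With these two ingredients in place, the conclusion follows from the short-exact-sequence results in \cref{prop: basic properties of finiteness properties}: compact groups are of type $\fin_\infty$ and $\FP_\infty$ over any commutative ring, so $G_v$ is of type $\fin_n$ (respectively $\FP_n$ over $R$) if and only if $M$ is. The analogous sequence with $N$ handles vertices in $\vertexemph_Y$, and combining both cases yields the stated equivalences. I expect the main obstacle to be the compactness of the kernel $P_v$: this requires a careful identification of $P_v$ with an (in general infinite) product of compact edge stabilizers, which is precisely where the compact-point-stabilizer assumption on $M$ and $N$ enters through \cref{prop: smith properties}(2). A secondary technical point is to justify that the sequence above is genuinely a short exact sequence in the category of tdlc groups, i.e.\ that the quotient topology on $G_v/P_v$ coincides with the permutation topology on $M$; this is where openness of $c_X(\,\cdot\,,v)$ is used.
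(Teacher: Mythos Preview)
Your proposal is correct and follows essentially the same route as the paper: both construct the short exact sequence $1 \to P_v \to G_v \to M \to 1$ with $P_v$ the pointwise stabilizer of $B_1(v)$, show $P_v$ is compact, and then invoke \cref{prop: basic properties of finiteness properties}. The only difference is cosmetic: the paper cites \cite{smith}*{Lemma 13, Proposition 18} for surjectivity and the kernel identification, and observes compactness of $P_v$ more directly as a closed subgroup of a single compact edge stabilizer $G_{(v,w)}$, so the Tychonoff argument you outline is unnecessary.
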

\begin{proof}
    Since $M$ and $N$ are closed with compact point stabilizers, it follows from \cref{prop: smith properties} that $G$ is a tdlc group. Moreover, all edge stabilizers of $G$ are compact and therefore of type $\fin_\infty$ and of type $\FP_\infty$ over $R$.

    Let $v \in  \vertexemph_X$.
    Then the point stabilizer $G_v \vert_{B_1(v)} \le \Sym(B_1(v))$ of $v$ at the $1$-ball around $v$ is isomorphic to $M$ by \cite{smith}*{Lemma 13}. Hence, there exists a surjective group homomorphism $f \colon G_v \to M$ with $\ker(f)=\bigcap_{w \in B_1(v)} G_{(v,w)} = K$, see \cite{smith}*{Proposition 18}.
    As each edge stabilizer $G_{(v,w)}$ is compact, $K$ is also compact and therefore of type $\fin_\infty$ and of type $\FP_\infty$ over $R$. 
    The map $f$ induces the short exact sequence
    $$1 \to K \to G_v \to M \to 1.$$
    As $K $ is of type $\fin_\infty$ and of type $\FP_\infty$ over $R$, it follows from \cref{prop: basic properties of finiteness properties} that $G_v$ and $M$ have the same topological and homological finiteness properties for all $v \in \vertexemph_X$. The same argument applies for $G_w$ and $N$ for all $w \in V_Y$.
\end{proof}

Now we prepare our proof of \cref{main thm: finiteness properties of graph of groups}. A group that splits as a finite graph of groups can be described by iteratively taking amalgamated free products and HNN extensions. Thus, we begin by proving the base cases $n=1$ and $n=2$, that is compact generation and compact presentability, for amalgamated free products and HNN extensions. For further details about amalgamated free products and HNN extensions we refer the reader to Lyndon and Schupp \cite{lyndon+schupp}*{Chapter IV}.

\begin{prop}\label{prop: amalgam compactly generated}
    Let $G = A *_C B$ be an amalgamated free product of tdlc groups. If $G$ and $C$ are compactly generated, then so are $A$ and $B$.
\end{prop}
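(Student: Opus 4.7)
The plan is to adapt the standard Bass-Serre argument for amalgamated products to the tdlc setting, using the tree $\bassserre$ on which $G = A *_C B$ acts. Under the standing assumption that makes $G$ a tdlc group---namely that $C$ is open in both $A$ and $B$---the subgroups $A, B, C$ are open in $G$ and $G$ acts continuously on $\bassserre$ with discrete vertex and edge sets, vertex stabilizers $A, B$ and edge stabilizer $C$. First I would normalize the generating data. Since $G$ is compactly generated, write $G = \langle K, F \rangle$ for a compact open subgroup $K$ and a finite set $F$. The compact group $K$ acts on the discrete $G$-set $\{v_A, v_B\}$ with finite orbit, so $K \cap A \cap B = K \cap C$ has finite index in $K$; after enlarging $F$ by coset representatives one may assume $K \leq C$. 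Using that $C$ is compactly generated and $K$ is a compact open subgroup of $C$, we may also write $C = \langle K, F_C \rangle$ with $F_C \subseteq C$ finite.

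Next, for every $f \in F$ fix an expression $f = x_1^{(f)} \cdots x_{n_f}^{(f)}$ with each $x_i^{(f)} \in A \cup B$ (possible because $G = \langle A, B \rangle$), and let $F_A \subseteq A$, $F_B \subseteq B$ be the resulting finite collections of letters. The goal then becomes to show
\[
A = \langle K \cup F_A \cup F_C \rangle,
\]
which suffices because $K \cup F_A \cup F_C$ is compact. The inclusion $\supseteq$ is clear. For the reverse inclusion, take $a \in A$ and write it as a word $a = y_1 \cdots y_m$ with $y_i \in K \cup F_A \cup F_B \subseteq A \cup B$. Since $a \in A$, its normal form in the amalgamated product $A *_C B$ is the single letter $a$, so the given word reduces to $a$ by repeatedly combining consecutive same-factor letters and absorbing intermediate elements of $C$ into either factor. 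Each $A$-combination produces an element of $\langle K, F_A \rangle$, each $B$-combination an element of $\langle K, F_B \rangle$, and every $B$-sub-word that must disappear in the reduction must first evaluate to an element of $C$, which in turn rewrites as a word in $K \cup F_C$. Tracing the reduction expresses $a$ as a product in $K \cup F_A \cup F_C$, so $a \in \langle K \cup F_A \cup F_C \rangle$. The symmetric argument, with the roles of $A$ and $B$ exchanged, yields a compact generating set of $B$.

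The main obstacle is to give a rigorous account of the reduction step: why does each $B$-sub-word that must vanish in the reduction necessarily pass through an element of $C$? In the discrete setting this is essentially the normal form theorem for amalgamated products, and since the reduction is a purely algebraic manipulation, it transfers unchanged to the tdlc case as soon as $C$ is open in $A$ and $B$. A tree-theoretic reformulation, which I would use if writing the proof in full, is that the word $y_1 \cdots y_m$ traces a closed walk at $v_A$ in $\bassserre$; in a tree every closed walk reduces to the trivial walk by cancelling back-tracking pairs, and each such pair corresponds to a sub-word stabilizing some edge of $\bassserre$, hence lying in a conjugate of $C$---only finitely many of which arise in any fixed word.
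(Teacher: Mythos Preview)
Your proposal is correct and follows essentially the same approach as the paper: choose a compact open subgroup $K \leq C$, extend it by finitely many elements to generating sets of $C$ and of $G$, split the extra $G$-generators into $A$-letters and $B$-letters, and then use the normal-form reduction in $A *_C B$ to show that any $a \in A$ can be rewritten as a word in $K$, the $A$-letters, and the finite generating set of $C$. The only cosmetic difference is that the paper picks $K$ inside $C$ from the outset (via van Dantzig applied to $C$) rather than starting with an arbitrary compact open $K \leq G$ and then passing to a finite-index subgroup contained in $C$; the reduction argument itself is the same.
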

\begin{proof}
    We identify $C, A$ and $B$ with their images in $G$ under the canonical homomorphisms. As $C$ is a tdlc group, by van Dantzig's theorem it contains a compact open subgroup $K$. Since $C$ and $G$ are compactly generated, it follows from \cite{cornulier+delaharpe}*{Corollary 2.C.6.} that there exist finitely many elements $g_1, \ldots, g_m \in C, g_{m+1} \ldots, g_n \in G$ such that
    $$C = \left< K, g_1, \ldots, g_m \right> \text{ and } G = \left< K, g_1, \ldots, g_n \right>$$
    are compact generating sets. Using the normal form for amalgamated free products we can write
    $$g_i = a_{i, 1} b_{i, 1} a_{i, 2} b_{i, 2} \cdots a_{i, n_i} b_{i, n_i} \text{ where } a_{i, j} \in A, b_{i, j} \in B.$$ 
    Thus, we obtain a new compact generating set $G = \left< K, (a_{i, j}), (b_{i, j}) \right>$, adding $g_1, \ldots, g_m$ to both $(a_{i,j})$ and $(b_{i,j})$ if necessary.
    
    We will show that $A = \left< K, (a_{i, j}) \right>$ and hence that $A$ is compactly generated. Let $a \in A$. Since $a \in G$, it can be written as a product of elements in $K, (a_{i, j})$ and $(b_{i, j})$. Therefore, we can write $a = \alpha_1 \beta_1 \cdots \alpha_k \beta_k$, where each $\alpha_i$ is a word in $K$ and $(a_{i, j})$, and each $\beta_i$ is a word in $K$ and $(b_{i, j})$.
    Since $a \in A$, one reduced form of $a$ is $a$ itself. If $k > 1$, the product $\alpha_1 \beta_1 \cdots \alpha_k \beta_k$ is not a reduced form for $a$ because two reduced forms have the same length. In this case, some $\alpha_i$ or $\beta_i$ lies in $C$.
    Suppose $\alpha_i \in C$. Then, we can rewrite it as a product of elements in $K$ and $g_1, \ldots, g_m \in (b_{i, j})$ and $\beta_{i-1} \alpha_i \beta_i$ is a word in $K$ and $(b_{i, j})$. We replace $\beta_{i-1}$ with $\beta_{i-1} \alpha_i \beta_i$ and for $j \ge i$ we replace $\alpha_j$ with $\alpha_{j+1}$ and $\beta_j$ with $\beta_{j+1}$. If instead $\beta_i \in C$, we proceed similarly.
    
    By repeating this process, we eventually obtain a reduced form $a = \alpha_1 \beta_1$ or $a = \beta_1 \alpha_1$. As $a, \alpha_1 \in A$, we have $\beta_1 \in A \cap B = C$. We rewrite $\beta_1$ as a product of elements in $K$ and $g_1, \ldots, g_m \in (a_{i, j})$ and replace $\alpha_1$ with $\alpha_1 \beta_1$ (or $\beta_1 \alpha_1$). Thus, $a = \alpha_1 \in \left< K, (a_{i, j}) \right>$ and $A = \left< K, (a_{i, j}) \right>$ is compactly generated. An analogous argument shows that $B = \left< K, (b_{i, j}) \right>$ is compactly generated as well.
\end{proof}

\begin{prop}\label{prop: HNN compactly generated}
    Let $G = A *_C$ be an HNN extension of tdlc groups. If $G$ and $C$ are compactly generated, then so is $A$.
\end{prop}
\begin{proof}
    Let $\varphi \colon C \to C'$ be the isomorphism of the associated subgroups $C$ and $C'$. Let $K \le C$ be a compact open subgroup. There exist finitely many elements $g_1, \ldots, g_m \in C$ such that $C = \left< K, g_1, \ldots, g_m \right>$ is a compact generating set. Then $C' = \left< \varphi(K), \varphi(g_1), \ldots, \varphi(g_m) \right>$ is also a compact generating set. Let $K' := \varphi(K)$ and $g_{i+m} := \varphi(g_i)$. As $G$ is compactly generated, there exist finitely many elements $g_{2m+1}, \ldots, g_n \in G$ such that 
    $$
        G = \left< K, K', g_1, \ldots, g_n, t \right>
    $$
    is a compact generating set with $t$ being the stable letter of $G$.

    Using the normal form for HNN extensions we can write
    $$
        g_i = a_{i, 1} t^{\varepsilon_{i, 2}} a_{i, 2} \cdots t^{\varepsilon_{i, n_i}} a_{i, n_i} \text{ where } a_{i, j} \in A \text{ and } \varepsilon_{i,j} \in \{1, -1\}. 
    $$
    We obtain a compact generating set $G = \left< K, K', (a_{i, j}), t \right>$. Using the reduced form and applying similar reductions to \cref{prop: amalgam compactly generated} shows that $A = \left< K, K', (a_{i, j}) \right>$ is compactly generated.
\end{proof}

To prove \cref{main thm: finiteness properties of graph of groups} for compact presentability, we use a proposition that appears in \cite{castellano+cook} as an exercise. As we require a slightly stronger statement, we provide a full proof.

\begin{prop}[\cite{castellano+cook}, Proposition 3.7 (iii)]\label{prop: compactly presented colimit of compactly generated groups}
    Let $(G_n)_n$ be a sequence of tdlc groups with quotient maps $g_n \colon G_n \to G_{n+1}$. Let $G$ be the colimit of this sequence with quotient maps $\pi_n \colon G_n \to G$. If each $G_n$ is compactly generated and $G$ is compactly presented, then eventually $\pi_n \colon G_n \to G$ is an isomorphism.
\end{prop}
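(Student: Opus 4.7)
The plan is to construct, for some sufficiently large $m$, a continuous homomorphism $\psi \colon G \to G_m$ that is simultaneously a section of $\pi_m$ (i.e.\ $\pi_m \circ \psi = \id_G$) and satisfies $\psi \circ \pi_n = \alpha_{n,m}$ for an intermediate $n$, where $\alpha_{n,m} \colon G_n \to G_m$ denotes the transition map. The second identity forces $\psi(G) = \alpha_{n,m}(G_n) = G_m$ since each $g_k$, hence each $\alpha_{n,k}$, is surjective; the first identity forces $\psi$ to be injective. Together these make $\psi$ an isomorphism and consequently $\pi_m = \psi^{-1}$.

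By van Dantzig, fix a compact open subgroup $U \le G$ and, by compact generation, a finite symmetric set $F \subset G$ with $G = \langle U, F \rangle$. Compact presentability yields a finite list $R = \{r_1, \ldots, r_\ell\}$ of words of bounded length in $U \cup F$ such that $G$ is presented by the generators $U \cup F$ modulo the multiplication relations of $U$ and the relators $R$. Fix $n$ large; by surjectivity of $\pi_n$ the finite set $F$ lifts to $\widetilde F \subset G_n$, and any compact open $V_n \le G_n$ (from van Dantzig) has $\pi_n(V_n)$ compact open in $G$, so after replacing $U$ by $U \cap \pi_n(V_n)$ and enlarging $F$, $R$ accordingly we may assume $\pi_n(V_n) = U$; after further shrinking we arrange that $\pi_n|_{V_n} \colon V_n \xrightarrow{\cong} U$ is an isomorphism. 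Each relator then lifts to $\widetilde r_i \in G_n$ with $\widetilde r_i \in \ker(\pi_n) = \bigcup_{k \ge n} \ker(\alpha_{n,k})$, so by finiteness of $R$ there exists $m \ge n$ with $\alpha_{n,m}(\widetilde r_i) = 1$ in $G_m$ for all $i$. Define $\psi \colon G \to G_m$ on generators by $u \mapsto \alpha_{n,m}((\pi_n|_{V_n})^{-1}(u))$ and $f \mapsto \alpha_{n,m}(\widetilde f)$; well-definedness uses the isomorphic lift of $U$ to respect its multiplication and the vanishing of the relators to respect $R$, while continuity follows since on $U$ the map factors as $\alpha_{n,m} \circ (\pi_n|_{V_n})^{-1}$.

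To conclude, $\pi_m \circ \psi = \id_G$ holds by construction on generators. For $\psi \circ \pi_n = \alpha_{n,m}$: the equality is immediate on $V_n$, and on a fixed finite set of generators of $G_n$ modulo $V_n$ it follows after enlarging $m$ once more, since each such generator differs from some word in $V_n \cup \widetilde F$ by an element of $\ker(\pi_n)$, leaving only finitely many residues to kill along the directed system. The main obstacle is the isomorphic lift of $U$ to $V_n$: the kernel $V_n \cap \ker(\pi_n)$ is a directed union of the closed subgroups $V_n \cap \ker(\alpha_{n,k})$ of the compact group $V_n$, and since compact groups can harbor infinite strictly increasing chains of closed subgroups, trivializing this kernel requires shrinking $U$ (and correspondingly $V_n$) iteratively, using that the compact presentation of $G$ involves only finitely many relational data so that the shrinking process terminates.
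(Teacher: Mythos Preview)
Your overall strategy---build a section $\psi \colon G \to G_m$ by lifting a compact open subgroup together with finitely many extra generators, then kill the finitely many relators by pushing forward---is essentially the same skeleton as the paper's argument, and the part where you trivialize the relators by passing to large $m$ is correct. The genuine gap is exactly the one you flag yourself: the isomorphic lift of $U$ to $V_n$. You assert ``after further shrinking we arrange that $\pi_n|_{V_n} \colon V_n \xrightarrow{\cong} U$'' and then, in the last paragraph, propose to achieve this by shrinking $U$ iteratively. That does not work: shrinking $U$ (equivalently $V_n$) cannot force $V_n \cap \ker(\pi_n)$ to become trivial unless $\ker(\pi_n)$ is discrete in $G_n$, which is not assumed. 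The compact presentation data of $G$ is irrelevant to this particular point---finiteness of the relator set gives no control over the intersection of a compact open subgroup with a closed normal subgroup.

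The correct move, which the paper makes, is not to shrink $U$ but to go \emph{forward} along the sequence and invoke the Baire category theorem. Start instead with a compact open $C_0 \le G_0$, set $C_n \coloneqq f_n(C_0)$ and $C \coloneqq \pi_0(C_0)$, and look at the kernels $K_n \coloneqq \ker(f_n|_{C_0})$ and $K \coloneqq \ker(\pi_0|_{C_0})$. Then $K = \bigcup_n K_n$ is a countable increasing union of closed subgroups of the compact (hence Baire) group $K$; by Baire some $K_n$ is open in $K$, hence of finite index, and the increasing union then forces $K_m = K$ for all large $m$. For such $m$ the map $\pi_m|_{C_m} \colon C_m \to C$ is an isomorphism, which is the lift you need. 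With this in hand your construction of $\psi$ goes through (and is equivalent to the paper's observation that $G_m$ and $G$ then satisfy the same presentation).
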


\begin{proof}
    By van Dantzig's theorem, there exists a compact open subgroup $C_0 \le G_0$. Since $G_0$ is compactly generated, it follows from \cite{cornulier+delaharpe}*{Corollary 2.C.6.} that there are finitely many elements $a_1, \ldots, a_k \in G_0$ such that $G_0 = \left< C_0, a_1, \ldots, a_k \right>$ is a compact generating set. For each $n$ define
    $$f_n := g_{n-1} \circ \ldots \circ g_0 \colon G_0 \to G_n \text{ and } C_n := f_n(C_0), C := \pi_0(C_0).$$
    As $f_n$ and $\pi_0$ are quotient maps and thus open and continuous, we get compact generating sets $G_n = \left< C_n, f_n(a_1), \ldots, f_n(a_k) \right>$ and $G = \left< C, \pi_0(a_1), \ldots, \pi_0(a_k) \right>$ with $C_n$ and $C$ being compact open subgroups.

    Let $K_n$ be the kernel of $f_n\vert_{C_0}$ and let $K$ be the kernel of $\pi_0\vert_{C_0}$. Then $K = \cup_n K_n$ is a countable union of compact subgroups. By the Baire category theorem, some $K_m$ is an open subgroup of $K$. As $K$ is compact, $K_m$ has finite index in $K$ and eventually $K_n = K$. For these $n$ the quotient map $\pi_n \vert_{C_n}$ is an isomorphism. Thus, eventually we may identify each $C_n$ with $C$ and the elements $f_n(a_i)$ with $b_i := \pi_0(a_i)$ and eventually each $G_n$ and $G$ have the same compact generating set $\left< C, b_1, \ldots, b_k \right>$.

    We now consider the general presentation $((\progr, \Lambda), \phi)$ of $G$ as in \cite{castellano+weigel}*{Proposition 5.10 (a)}. As $G$ and the fundamental group $\pi_1(\progr, \Lambda)$ are both compactly presented, we have a short exact sequence
    $$1 \rightarrow N \rightarrow \pi_1(\progr, \Lambda) \rightarrow G \rightarrow 1,$$
    where $N$ is compactly generated as a normal subgroup of $\pi_1(\progr, \Lambda)$ by \cite{cornulier+delaharpe}*{Proposition 8.A.10 (2)}. In fact, since $N$ is discrete by \cite{castellano+weigel}*{Proposition 5.10 (b)}, it is finitely generated as a normal subgroup and $N = \left< \left< r_1, \ldots, r_m \right> \right>$ for certain $r_1, \ldots, r_m \in N$. As $G$ is the colimit of the groups $(G_n)_n$, eventually the relations $r_1, \ldots, r_m$ hold in some $G_n$. For such $n$, the presentation $G = \left< C, b_1, \ldots, b_k \mid r_1, \ldots, r_m \right>$ also defines $G_n$. As $\pi_n \vert_{C_n} \colon C_n \to C$ is an isomorphism, it follows that $\pi_n \colon G_n \to G$ is an isomorphism as well.
\end{proof}

\begin{prop}\label{prop: amalgam compactly presented}
    Let $G = A *_C B$ be an amalgamated free product of tdlc groups. If $G$ and $C$ are compactly presented, then so are $A$ and $B$.
\end{prop}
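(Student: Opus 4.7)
The plan is to deduce compact presentability of $A$ (and by symmetry $B$) from \cref{prop: compactly presented colimit of compactly generated groups} by realizing $A$ as a colimit of compactly presented tdlc groups and comparing the corresponding amalgams with $G$. Since compactly presented groups are compactly generated, \cref{prop: amalgam compactly generated} already gives that $A$ and $B$ are compactly generated, so the task is precisely to upgrade this to compact presentability.

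I would first fix a compact presentation $C = \langle T_C \mid R_C \rangle$ and extend $T_C$ to a compact generating set $T_A$ of $A$. Letting $R$ denote the full set of relations satisfied in $A$ with respect to $T_A$ and $R_n \subseteq R$ the subset of those of length at most $n$, I would define $A_n := \langle T_A \mid R_n \rangle$ with its natural tdlc topology. Each $A_n$ is compactly presented, the canonical maps $A_n \twoheadrightarrow A_{n+1}$ are quotient maps of tdlc groups, and $A = \varinjlim A_n$. For $n$ large enough that $R_n \supseteq R_C$, the natural map $C \to A_n$ is well defined, and it is injective because the composition $C \to A_n \twoheadrightarrow A$ equals the embedding $C \hookrightarrow A$.

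Next I would form $G_n := A_n *_C B$ for such $n$. Each $G_n$ is compactly generated (the union of compact generating sets of $A_n$ and $B$ generates $G_n$), and since amalgamated free products are pushouts that commute with directed colimits, $G = A *_C B = \varinjlim G_n$. Applying \cref{prop: compactly presented colimit of compactly generated groups} then yields that the quotient maps $G_n \to G$ are isomorphisms for all sufficiently large $n$. To transfer this back to $A$, I would use that injectivity of $C \hookrightarrow A_n$ implies injectivity of $A_n \hookrightarrow G_n$. The composition $A_n \hookrightarrow G_n \xrightarrow{\cong} G$ then factors as $A_n \twoheadrightarrow A \hookrightarrow G$, forcing $A_n \twoheadrightarrow A$ to be injective, hence an isomorphism; thus $A \cong A_n$ is compactly presented, and the argument for $B$ is analogous.

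The main obstacle I anticipate is the topological bookkeeping inside the tdlc category: endowing each $A_n$ with a tdlc topology in which the chosen generating set is compact, verifying that the amalgams $G_n = A_n *_C B$ are again tdlc groups to which \cref{prop: compactly presented colimit of compactly generated groups} applies, and confirming that the canonical homomorphisms involved are quotient maps of tdlc groups. These technicalities should be manageable by choosing the presentation of $A$ compatibly with that of $C$ so that a compact open subgroup implicit in the tdlc topology of $A$ is carried along through the colimit, and by exploiting that $C$ is open compact in both $A$ and $B$ in the amalgam decompositions arising from Smith groups where this proposition will be applied.
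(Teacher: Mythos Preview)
Your proposal is correct and follows essentially the same route as the paper: truncate the relations of $A$ to form compactly presented approximations $A_n$, amalgamate with $B$ over $C$ to obtain $G_n$, apply \cref{prop: compactly presented colimit of compactly generated groups} to conclude $G_n \to G$ is eventually an isomorphism, and then deduce that $A_n \to A$ is an isomorphism. Your injectivity argument for $A_n \to A$ (via the embedding $A_n \hookrightarrow G_n$ coming from the amalgam structure) is exactly what underlies the paper's terse ``by commutativity of the diagram'' step, and the topological bookkeeping you flag is likewise glossed over in the paper.
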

\begin{proof}
    By \cref{prop: amalgam compactly generated} both $A$ and $B$ are compactly generated and there is a compact set $K \subseteq C$ and there are finitely many $a_i \in A, b_i \in B$ such that there are presentations
    $$C = \left< K \mid R_C \right>, A = \left< K, (a_i) \mid R_A \right> \text{ and } B = \left< K, (b_i) \mid R_B \right>$$
    where $R_A$ and $R_B$ are the sets containing all relations in $A$ and $B$. As $C$ is compactly presented, we can choose $R_C$ to only contain relations of bounded length. Define $R_A^n $ to be the set of relations in $A$ up to length $n$ and
    \begin{gather*}
        A_n := \left< K, (a_i) \mid R_A^n \right>, \\
        G_n := A_n *_C B = \left< K, (a_i), (b_i) \mid R_C, R_A^n, R_B \right>.
    \end{gather*}
    The group $G_n$ is well-defined for sufficiently large $n$ because the relations in $R_C$ have bounded length. We have the following commutative diagram.
    $$\begin{tikzcd}
    A_n \arrow[d, hook] \arrow[r, "\pi_n", two heads] & A \arrow[d, hook] \\
    G_n \arrow[r, "\varphi_n", two heads] & G
    \end{tikzcd}$$

    By \cref{prop: compactly presented colimit of compactly generated groups} eventually the quotient map $\varphi_n \colon G_n \to G$ is an isomorphism. Using commutativity, for such $n$ the map $\pi_n \colon A_n \to A$ is an isomorphism. Therefore, $A$ is compactly presented. A symmetric argument shows that $B$ is compactly presented as well.    
\end{proof}

\begin{prop}
    Let $G = A *_C$ be an HNN extension of tdlc groups. If $G$ and $C$ are compactly presented, then so is $A$.
\end{prop}
\begin{proof}
    Let $\varphi \colon C \to C'$ be the isomorphism of the associated subgroups. By \cref{prop: HNN compactly generated}, $A$ is compactly generated and there is a compact set $K \subseteq C$ with $K' := \varphi(K)$ and there are finitely many $a_i \in A$ such that there are presentations
    $$
        C = \left< K \mid R_C \right>, C' = \left< K' \mid R_{C'} \right> \text{ and } A = \left< K, K', (a_i) \mid R_A \right>
    $$
    where $R_A$ is the set containing all relations in $A$ and where $R_C$ and $R_{C'}$ contain relations of bounded length. Let $R_A^n$ be the set of relations in $A$ up to length $n$ and
    \begin{gather*}
        A_n := \left< K, K', (a_i) \mid R_A^n \right>, \\
        G_n := A_n *_C = \left< K, K', (a_i), t \mid R_C, R_{C'}, R_A^n, t^{-1}at = \varphi(a) : a \in K \right>.
    \end{gather*}
    As in the proof of \cref{prop: amalgam compactly presented}, eventually the quotient map $\varphi_n \colon G_n \to G$ is an isomorphism of topological groups. We get an isomorphism $\pi_n \colon A_n \to A$ and therefore, $A$ is compactly presented.
\end{proof}

Next, we prove \cref{main thm: finiteness properties of graph of groups} for $\FP_n$. For $n = 1$, Castellano \cite{castellano}*{Proposition 4.1} has proven the statement under the assumption of compact open edge groups. However, her proof idea can be adapted to the case $n > 1$.

\begin{thm}[\cite{bonn}, Theorem 5.4.6]\label{thm: graph of groups fin tdlc}
    Let $G$ be a tdlc group which splits as a finite graph of groups $(\graph, \progr)$. Suppose for each edge $e \in \edge{\graph}$ the edge group $\progr_e$ is of type $\FP_n$ over $\Z$. Then $G$ is of type $\FP_n$ over $\Z$ if and only if for each vertex $v \in \vertex{\graph}$ the vertex group $\progr_v$ is of type $\FP_n$ over $\Z$. 
\end{thm}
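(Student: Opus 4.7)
The plan is to leverage the action of $G$ on the Bass-Serre tree $\bassserre$ of $(\graph, \progr)$. Since $\graph$ is finite, $G$ acts cocompactly on $\bassserre$, with vertex stabilizers conjugate to the $\progr_v$ and edge stabilizers conjugate to the $\progr_e$. Contractibility of $\bassserre$ turns its augmented cellular chain complex into a short exact sequence of discrete $\Z[G]$-modules
\begin{equation*}
0 \longrightarrow \bigoplus_{e \in \edge{\graph}} \Z[\factortext{G}{\progr_e}] \longrightarrow \bigoplus_{v \in \vertex{\graph}} \Z[\factortext{G}{\progr_v}] \longrightarrow \Z \longrightarrow 0.
\end{equation*}
The bridge between this sequence and the finiteness properties of the various groups is a Shapiro-type identification: for an open subgroup $H \le G$, the induced module $\Z[\factortext{G}{H}]$ is of type $\FP_n$ in the category of discrete $\Z[G]$-modules if and only if $H$ is of type $\FP_n$ over $\Z$. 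Concretely, one takes a proper discrete resolution of the trivial $\Z[H]$-module that is finitely generated up to degree $n$ and induces it via $\Z[G] \otimes_{\Z[H]} (-)$, which sends summands $\Z[\factortext{H}{U}]$ at compact open $U \le H$ to summands $\Z[\factortext{G}{U}]$.

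For the \emph{if} direction, assume each $\progr_v$ is of type $\FP_n$ over $\Z$. By hypothesis the edge groups are of type $\FP_n$ as well, so the Shapiro-type lemma makes both outer terms of the sequence above of type $\FP_n$ as discrete $\Z[G]$-modules. Standard closure of $\FP_n$ under short exact sequences of modules then forces $\Z$ to be of type $\FP_n$ as a discrete $\Z[G]$-module, i.e.\ $G$ is of type $\FP_n$ over $\Z$. For the converse, assume $G$ is of type $\FP_n$, so $\Z$ is of type $\FP_n$ as a discrete $\Z[G]$-module. The hypothesis on the edge groups makes the left-hand term of type $\FP_n$, and the same closure property applied now to $\Z$ and $\bigoplus_e$ gives that $\bigoplus_{v \in \vertex{\graph}} \Z[\factortext{G}{\progr_v}]$ is of type $\FP_n$. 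Finiteness of $\vertex{\graph}$ makes each summand a retract, hence itself of type $\FP_n$, and applying the Shapiro-type lemma in reverse yields that each $\progr_v$ is of type $\FP_n$ over $\Z$.

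The main technical obstacle is that, for tdlc groups in the sense of Castellano and Corob Cook, the property $\FP_n$ is defined using \emph{proper discrete} resolutions rather than projective ones, so the homological algebra used above is not a direct corollary of classical results on $\FP_n$ over rings. Three ingredients must be established in the category of proper discrete $\Z[G]$-modules: the Shapiro-type equivalence for $\Z[\factortext{G}{H}]$ with $H$ open, closure of $\FP_n$ under short exact sequences of modules in both of the directions used above, and closure of $\FP_n$ under passage to direct summands of finite direct sums. All three are mild adaptations of standard homological-algebra facts, and once they are verified the geometric input from Bass-Serre theory reduces the theorem to the short chain-complex argument outlined above.
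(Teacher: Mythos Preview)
Your proposal is correct and follows essentially the same route as the paper: both use the augmented cellular chain complex of the Bass--Serre tree together with the Shapiro-type transfer for open subgroups (Castellano--Corob Cook, Corollary~3.18) and closure of $\FP_n$ under short exact sequences (their Corollary~3.12). The only cosmetic differences are that for the ``if'' direction the paper simply invokes Brown's criterion (their Theorem~4.7) rather than running the short-exact-sequence argument symmetrically as you do, and that your phrase ``outer terms'' is a slip---you mean the two leftmost nonzero terms $\bigoplus_e \Z[\factortext{G}{\progr_e}]$ and $\bigoplus_v \Z[\factortext{G}{\progr_v}]$.
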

\begin{proof}
    As Brown's criterion also holds for tdlc groups, see \cite{castellano+cook}*{Theorem 4.7}, the finiteness properties of $\progr_v$ imply these for $G$.

    Let $\bassserre$ be the Bass-Serre tree of $(\graph,\progr)$.
    For $e \in \edge{\bassserre}$ and $v \in \vertex{\bassserre}$ let $H_e$ and $H_v$ denote the edge stabilizers and vertex stabilizers of $\bassserre$ respectively. 
    By construction of $G$ and the Bass-Serre tree, all stabilizers are open subgroups of $G$.
    Note that the edge and vertex stabilizers of the Bass-Serre tree  are given by edge and vertex groups of $(\graph,\progr)$ respectively. 
    Therefore, $H_e$ is of type  $\FP_n$ over $\Z$.

    Let $G$ be of type $\FP_n$ over $\Z$. Then $\Z$ is of type $\FP_n$ as a $\Z[G]$-module. 
    As $H_e$ is of type $\FP_n$ over $\Z$, $\Z$ is of type $\FP_n$ as a $\Z[H_e]$-module. 
    Since $H_e$ is an open subgroup, \cite{castellano+cook}*{Corollary 3.18} implies that $\Z[G] \otimes_{H_e} \Z=\Z[\factortext{G}{H_e}]$ is of type $\FP_n$ as a $\Z[G]$-module.
    The cellular chain complex of the Bass-Serre tree $\bassserre$ is given by the short exact sequence 
    \begin{align*}
        0 \to \bigoplus_{e \in R_e} \Z[\factor{G}{H_e}] \to \bigoplus_{v \in R_v} \Z[\factor{G}{H_v}] \to \Z \to 0
    \end{align*}
    of permutations modules over $\Z$, where $R_e$ and $R_v$ are representative systems of the edges respectively the vertices of the action of $\graph$ on $\bassserre$.
    By assumption, $\bigoplus_{e \in R_e}\Z[\factortext{G}{H_e}]$ and $\Z$ are both of type $\FP_n$ as $\Z[G]$-modules. Therefore, it follows by \cite{castellano+cook}*{Corollary 3.12} that $\bigoplus_{v \in R_v}\Z[\factortext{G}{H_v}]$ is of type $\FP_n$ as a $\Z[G]$-module. 
    It follows that $\Z[\factortext{G}{H_v}]$ is of type $\FP_n$ as a $\Z[G]$-module for all $v \in \vertex{\bassserre}$. 
    Since $H_v$ is an open subgroup and $\Z[\factortext{G}{H_v}]=\Z[G]\otimes_{H_v}\Z$, we can use \cite{castellano+cook}*{Corollary 3.18} again which implies that $\Z$ is of type $\FP_n$ as a $\Z[H_v]$-module. 
    Hence, $H_v$ is of type $\FP_n$ over $\Z$, so $\progr_v$ is of type $\FP_n$ over $\Z$ for all $v \in \vertex{\graph}$. 
\end{proof}

\begin{proof}[Proof of \cref{main thm: finiteness properties of graph of groups}]
    Part (1) is \cref{thm: graph of groups fin tdlc}.

    The if-implication of part (2) in the base cases $n = 1$ and $n = 2$ of compact generation and compact presentability follows from the definition of the fundamental group of a graph of groups. For compactly presented groups the properties $\fin_n$ and $\FP_n$ over $\Z$ are equivalent, so the if-implication for $n > 2$ follows from part (1).
    
    For the other implication recall that a group $G$ splitting as a finite graph of groups can be described by iteratively taking amalgamated free products and HNN extensions. Iterating \cref{prop: amalgam compactly generated} (resp. \cref{prop: amalgam compactly presented}) shows that all vertex groups are compactly generated (resp. compactly presented) if $G$ is. For $n > 2$ the only-if-implication follows again from part (1).
\end{proof}

\begin{proof}[Proof of \cref{main thm: smith finite}]
    By the assumption on the orbits of $M$ and $N$, it follows from \cite{smith}*{Lemma 22} that the quotient graph $\factorltext{\tree}{G}$ is a finite star graph. Hence, $G$ can be written as an iterated amalgamated free product with point stabilizers as factors and edge stabilizers as amalgamated subgroups. As the edge stabilizers are compact by assumption, they are of type $\fin_{\infty}$ and of type $\FP_{\infty}$. By \cref{main thm: finiteness properties of graph of groups}, $G$ is of type $\fin_n$ (resp. of type $\FP_n$ over $\Z$) if and only if each point stabilizer is of type $\fin_n$ (resp. of type $\FP_n$ over $\Z$). The statement now follows from \cref{thm: M N and point stabilizer}.
\end{proof}

\section{Examples}\label{sec: examples}
In this section we will construct non-discrete tdlc groups with certain finiteness properties. Taking a discrete group as the group $M$ and $\Sym(3)$ as the group $N$, the Smith group $\universal(M, N)$ will be a non-discrete tdlc group with the same finiteness properties as $M$. The easiest examples can be constructed if $M$ acts on itself by left multiplication. By taking other actions of $M$, the Smith group $\universal(M, N)$ can be arranged to be simple.

\begin{ex}
    A group $M$ acts on itself by left multiplication. Therefore, $M \le \Sym(M)$ is a transitive permutation group. As the action is free, $M$ has compact point stabilizers and is closed in $\Sym(M)$. Let $N = \Sym(3)$ be a permutation group (as a subgroup of $\Sym(3))$. Then $N$ is closed in $\Sym(3)$ and its action is transitive, not free and has compact point stabilizers that generate $N$. Since $N$ is a finite group, it is of type $\fin_{\infty}$ and of type $\FP_{\infty}$.

    By \cref{prop: smith properties}, the Smith group $\universal(M, N)$ is a non-discrete tdlc group. By \cref{main thm: smith finite}, it shares finiteness properties with $M$. In particular, we can choose $M$ to be a discrete group which is of type $\FP_2$ over $\Z$ but not finitely presented, for example a Bestvina--Brady group constructed in \cite{bestvina+brady}*{Examples 6.3}. Then $\universal(M, N)$ is a non-discrete tdlc group that is of type $\FP_2$ over $\Z$ but not compactly presented.
\end{ex}

We want to answer a stronger version of \cite{castellano+weigel}*{Question 2}. Furthermore, we will construct simple non-discrete tdlc groups with other finiteness properties. The simplicity criterion for Smith groups $\universal(M, N)$ needs both $M$ and $N$ to be generated by point stabilizers. As the action of $M$ on itself by left multiplication is free, one cannot expect $\universal(M, N)$ to be simple in this case. Instead we want $M$ to act on the coset space of certain subgroups.

\begin{ex}\label{ex: simple non-discrete tdlc group}
    Let $M$ be a group with a subgroup $Q$. Then $M$ acts on the  coset space $X = \factortext{M}{Q}$ by left multiplication. This action is faithful if and only if the intersection of all conjugates of $Q$ in $M$
    $$\bigcap_{m \in M} m Q m^{-1}$$
    is trivial. In that case $M \le \Sym(X)$ is a transitive permutation group with point stabilizers $M_{mQ} = m Q m^{-1}$. If $Q$ is finite, then $M$ has compact point stabilizers.
    
    Now assume that $M$ contains a finite subgroup $Q$ such the conjugates that $m Q m^{-1}$ intersect trivially and generate $M$. By \cref{prop: smith properties} and \cref{main thm: smith finite}, the Smith group $\universal(M, \Sym(3))$ is a simple non-discrete tdlc group which shares finiteness properties with $M$.
\end{ex}

We want to construct groups $M$ with finite subgroups $Q$ such that the conditions on the conjugates in \cref{ex: simple non-discrete tdlc group} are satisfied. For that, we will use Bestvina--Brady groups.

\begin{defi}
    Let $L$ be a finite flag complex with vertices $\{v_1, \ldots, v_n\}$. The \emph{right-angled Artin group} $A_L$ associated to $L$ is given by the presentation
    $$A_L = \left< v_1, \ldots, v_n \mid v_iv_j = v_jv_i \text{ for all edges } \{v_i, v_j\} \text{ in } L \right>.$$
    We have an epimorphism $\phi \colon A_L \to \Z$ which sends every generator $v_i \in A_L$ to $1 \in \Z$. The \emph{Bestvina--Brady group} $H_L$ associated to $L$ is the kernel of $\phi$.
\end{defi}

\begin{prop}[\cite{bestvina+brady}, Main Theorem]\label{prop: bestvina brady theorem}
    Let $L$ be a finite flag complex with Bestvina--Brady group $H_L$. Let $R$ be a non-trivial commutative ring.
    \begin{enumerate}
        \item $H_L$ is of type $\FP_{n+1}$ over $R$ if and only if $L$ is homologically $n$-connected over $R$.
        \item $H_L$ is finitely presented if and only if $L$ is simply connected.
    \end{enumerate}
\end{prop}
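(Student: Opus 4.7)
The plan is to prove this by Morse theory on a CAT(0) cube complex, following the original approach. Let $S_L$ denote the Salvetti complex associated to $L$: this is a compact non-positively curved cube complex with one vertex, one edge for each vertex of $L$, and a $k$-cube for each $(k-1)$-simplex (i.e.\ $k$-clique) of $L$, with $\pi_1(S_L) \cong A_L$. Its universal cover $\tilde{S}_L$ is a CAT(0) cube complex on which $A_L$ acts freely, properly and cocompactly. The surjection $\phi\colon A_L \to \Z$ extends to a cellular map $S_L \to S^1$ inducing $\phi$ on $\pi_1$, and this lifts to an $A_L$-equivariant Morse function $f \colon \tilde{S}_L \to \R$ which restricts on each cube to a generic linear height function. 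Set $Y \coloneqq f^{-1}(0)$; the Bestvina--Brady group $H_L = \ker \phi$ preserves $Y$ and acts on it freely and cocompactly.

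The key local computation is that at every vertex $v$ of $\tilde{S}_L$, both the ascending link $\mathrm{Lk}_\uparrow(v)$ and the descending link $\mathrm{Lk}_\downarrow(v)$ with respect to $f$ are canonically isomorphic to the flag complex $L$. I would first establish this by noting that the link of any vertex in $\tilde{S}_L$ is the double of $L$ across its image, and that the height function splits this link into $L \sqcup L$. The general Morse-theoretic principle (as developed in \cite{bestvina+brady}) then says that when one passes a critical level, the change in homotopy/homology type of the sub/superlevel filtration is governed by coning off the ascending/descending link of the corresponding vertex. From this one deduces, by an equivariant Mayer--Vietoris argument comparing $Y$ to a large sublevel set, the following reduction lemma: $Y$ is $n$-connected (respectively, homologically $n$-connected over $R$) if and only if $L$ is.

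To conclude, I would combine this reduction with Brown's criterion. For (1), since $H_L$ acts freely and cocompactly on $Y$, Brown's criterion gives $H_L$ of type $\FP_{n+1}$ over $R$ iff $\tilde{H}_i(Y;R) = 0$ for $i \le n$, which by the reduction is iff $L$ is homologically $n$-connected over $R$. For (2), finite presentability ($\fin_2$) requires $Y$ to be simply connected, and the reduction shows this is equivalent to $L$ being simply connected (vanishing of $H_1$ alone is insufficient because $\pi_1$ of a cube complex is detected by loops in links, which must be nullhomotopic rather than merely nullhomologous).

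The main obstacle is the Morse-theoretic link computation and the care needed to ensure that the Morse function is generic so that critical points occur only at vertices; once the ascending/descending link identification is in hand, the rest reduces to standard applications of Brown's criterion and spectral sequence arguments for homological finiteness. A subtler point is distinguishing $\pi_1$ from $H_1$ in the statement about finite presentability, which is precisely what forces the $\fin_2$ version to require simple connectivity rather than merely $H_1 = 0$ over $\Z$.
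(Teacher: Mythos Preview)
The paper does not prove this proposition; it is quoted as the Main Theorem of \cite{bestvina+brady} and used as a black box. Your sketch does correctly identify the Morse-theoretic framework of the original argument (Salvetti complex, height function $f$, ascending/descending links isomorphic to $L$), and the sufficiency direction of both (1) and (2) is fine: a free cocompact $H_L$-action on an $n$-acyclic (resp.\ simply connected) complex indeed certifies $\FP_{n+1}$ (resp.\ finite presentability).

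There is, however, a genuine gap in the necessity direction. You assert that ``Brown's criterion gives $H_L$ of type $\FP_{n+1}$ over $R$ iff $\tilde H_i(Y;R)=0$ for $i\le n$'', but no such biconditional holds: a group of type $\FP_{n+1}$ can perfectly well act freely and cocompactly on a complex with nonvanishing low-degree homology (e.g.\ $\Z$ acting by translation on a line with a circle attached at every integer). The same objection applies to your claim that ``finite presentability requires $Y$ to be simply connected''. Even granting your reduction lemma relating the connectivity of $Y$ to that of $L$, knowing that the \emph{particular} complex $Y$ fails to be $n$-acyclic says nothing about $H_L$. The Bestvina--Brady proof of necessity instead uses the \emph{filtration} form of Brown's criterion: one filters the contractible cube complex $\tilde S_L$ by the $H_L$-cocompact slabs $f^{-1}([-t,t])$ and shows that whenever $\tilde H_k(L;R)\neq 0$ for some $k\le n$, the directed system $\{H_k(f^{-1}([-t,t]);R)\}_t$ is not essentially trivial, by constructing explicit cycles (``sheets'') that persist in every slab. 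That construction is the substantive part of the ``only if'' half and is missing from your outline.
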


The idea is to take $M$ to be the semidirect product $H_L \rtimes \aut(L)$. This is well-defined as any flag automorphism of $L$ induces a group automorphism of $H_L$. We take $\aut(L)$ to be the finite subgroup of $M$. We need criteria to see if the conditions on the conjugates are met.

\begin{prop}\label{prop: semidirect product conjugates intersect trivially}
    Let $Q$ be a finite group acting on a torsion-free group $H$ by automorphisms. Denote by $M$ the corresponding semidirect product $H \rtimes Q$. If $Q$ acts faithfully on $H$, then the conjugates of $Q$ in $M$ intersect trivially.
\end{prop}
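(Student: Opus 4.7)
My plan is to show that the normal core $K \coloneqq \bigcap_{m \in M} mQm^{-1}$ of $Q$ in $M$ is trivial by a direct calculation inside the semidirect product. Since $Q$ normalizes itself, only conjugates by elements of $H$ contribute, so
\[
K \;=\; \bigcap_{h \in H} hQh^{-1},
\]
and by construction $K$ is a normal subgroup of $M$ that is contained in $Q$.

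To show $K = \{1\}$, I would fix an arbitrary $k \in K$ and any $h \in H$. Since $K$ is normal in $M$, the conjugate $hkh^{-1}$ lies in $K$, and in particular in $Q$. Writing elements of $M = H \rtimes Q$ as pairs $(h,q) \in H \times Q$ with the usual product
\[
(h_1,q_1)(h_2,q_2) \;=\; \bigl(h_1 \cdot (q_1 \cdot h_2),\, q_1 q_2\bigr),
\]
where $q \cdot h$ denotes the action of $Q$ on $H$, a short computation yields
\[
(h,1_Q)(1_H,k)(h^{-1},1_Q) \;=\; \bigl(h \cdot (k \cdot h^{-1}),\; k\bigr).
\]
For this element to lie in $Q = \{1_H\} \times Q$ its $H$-component must be trivial, which forces $k \cdot h = h$.

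Since $h$ was arbitrary, $k$ acts trivially on the whole of $H$, and the faithfulness of the action of $Q$ on $H$ then forces $k = 1_Q$. Hence $K = \{1\}$, as required. The only real technical point is to carry out the conjugation computation inside the semidirect product carefully; beyond that the argument is immediate. It is perhaps worth noting that the torsion-freeness assumption on $H$ does not actually enter the proof, although it is satisfied in the intended application to Bestvina--Brady groups.
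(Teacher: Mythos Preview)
Your proof is correct and follows the same basic strategy as the paper: compute conjugates of $Q$ explicitly in the semidirect product and read off that any element of the intersection must act trivially on $H$. The organization differs slightly. You begin by noting that $K \subseteq Q$ (since $Q$ is itself one of the conjugates), so every element of $K$ already has the form $(1_H,k)$; conjugating by $(h,1_Q)$ and demanding the result lie in $Q$ then forces $k\cdot h = h$. The paper instead starts with a general element $(g,p)$ in the intersection, derives $p\cdot h = g^{-1}h$ for all $h$, and then invokes finiteness of $Q$ together with torsion-freeness of $H$ to conclude $g = 1$ before finishing with faithfulness.

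Your observation that torsion-freeness is not actually needed is correct: in the paper's own computation one can simply set $h = 1_H$ in $g = h\cdot(p\cdot h^{-1})$ to obtain $g = 1_H$ directly, bypassing the torsion argument entirely. So your route is a genuine simplification, and the hypothesis that $H$ be torsion-free is indeed superfluous for this proposition (though of course it holds in the Bestvina--Brady application).
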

\begin{proof}
    For $(h, q) \in M$ we have
    \begin{align*}
        (h, q) \cdot Q \cdot (h, q)^{-1} &= \{ (h, q) \cdot (1, p) \cdot (h, q)^{-1} : p \in Q \} \\
        &= \{ (h \cdot ((qpq^{-1}). h^{-1}), qpq^{-1}) : p \in Q \} \\
        &= \{ (h \cdot (p'.h^{-1}), p') : p' \in Q \}.
    \end{align*}
    Let $(g, p) \in \bigcap_{h \in H} \{ (h \cdot (q.h^{-1}), q) : q \in Q \}$. Then $(g, p) = (h \cdot (p.h^{-1}), p)$ for every $h \in H$. In particular, $g = h \cdot (p.h^{-1})$ for every $h \in H$. We get $p.h = g^{-1}h$ for every $h \in H$, which means that $p$ corresponds to left multiplication by $g^{-1}$. As $Q$ is finite, $p$ has finite order. But $H$ is torsion-free, therefore $g = 1$ and $p$ corresponds to $\id_H$. Since $Q$ acts faithfully on $H$, $p = 1$ and the intersection of all conjugates of $Q$ is trivial.
\end{proof}

\begin{prop}\label{prop: bestvina brady generators}
    Let $L$ be a finite connected flag complex. Then the Bestvina--Brady group $H_L$ is finitely generated by the set $S = \{xy^{-1} : \{x, y\} \in E(L)\}$.
\end{prop}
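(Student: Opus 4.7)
The plan is to show $\langle S \rangle = H_L$; the finiteness of $S$ is immediate from the finiteness of $L$, and the inclusion $H := \langle S \rangle \subseteq H_L$ is clear because $\phi(xy^{-1}) = 0$ for every $xy^{-1} \in S$.  For the reverse inclusion I would first exploit the connectivity of $L$ to broaden the elements known to lie in $H$.  Given any two vertices $x, y \in V(L)$, a path $x = z_0, z_1, \ldots, z_k = y$ in $L$ gives
\[
xy^{-1} = (z_0 z_1^{-1})(z_1 z_2^{-1}) \cdots (z_{k-1} z_k^{-1}) \in H,
\]
and since consecutive vertices of the path commute in $A_L$, the identity $z_{i-1}^{-1} z_i = z_i z_{i-1}^{-1}$ also places $x^{-1} y$ in $H$.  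In particular, every two-letter word of $\phi$-weight zero lies in $H$.

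Next, I would fix $v_0 \in V(L)$ and use the splitting $A_L = H_L \rtimes \langle v_0 \rangle$ of $1 \to H_L \to A_L \to \Z \to 1$.  Writing any $w \in H_L$ as a word $v_{i_1}^{\epsilon_1} \cdots v_{i_m}^{\epsilon_m}$ with partial sums $s_j := \sum_{k \leq j} \epsilon_k$ (so $s_0 = s_m = 0$), the insertion of trivial factors $v_0^{-s_j} v_0^{s_j}$ yields
\[
w = \prod_{j=1}^m v_0^{s_{j-1}} v_{i_j}^{\epsilon_j} v_0^{-s_j},
\]
in which each factor lies in $H_L$ and has the form $v_0^k v_i v_0^{-k-1}$ or its inverse.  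Hence it suffices to show that every Reidemeister--Schreier generator $v_0^k v_i v_0^{-k-1}$ lies in $H$.

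I would establish the latter by induction on the graph distance $l := d_L(v_0, v_i)$.  For $l \leq 1$ the commutation relation gives $v_0^k v_i v_0^{-k-1} = v_i v_0^{-1} \in S$.  For $l \geq 2$, pick a neighbor $v_j$ of $v_i$ with $d_L(v_0, v_j) = l - 1$ and decompose
\[
v_0^k v_i v_0^{-k-1} = \bigl(v_0^k (v_i v_j^{-1}) v_0^{-k}\bigr) \cdot \bigl(v_0^k v_j v_0^{-k-1}\bigr),
\]
the second factor being in $H$ by the inductive hypothesis.  The main obstacle is the first factor: one must show that conjugation by powers of $v_0$ sends elements of $S$ back into $H$.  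This is delicate because $v_0$ need not commute with $v_i$, so the conjugate effectively absorbs a commutator $[v_0, v_i]$ for non-adjacent pairs.  When $v_0$ and $v_i$ share a common neighbor $z$ in $L$, the identity
\[
[v_0, v_i] = [v_0 z^{-1},\, v_i z^{-1}]
\]
(verified directly from the edge relations) expresses the commutator as an element of $H$, and an inductive iteration along the path from $v_0$ to $v_i$ reduces the general case to this common-neighbor configuration.  Conceptually this combinatorial argument is a shadow of the Bestvina--Brady Morse theory on the Salvetti complex, where connectedness of $L$ is equivalent to the connectedness of the $0$-level set acted on by $H_L$.
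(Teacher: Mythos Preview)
Your Reidemeister--Schreier reduction is correct up to the point you yourself flag as the main obstacle, namely showing $v_0^k(v_iv_j^{-1})v_0^{-k}\in H$. The commutator identity $[v_0,v_i]=[v_0z^{-1},v_iz^{-1}]$ is valid when $z$ is a common neighbor, but it is not the relevant object---you need $[v_0,\,v_iv_j^{-1}]$, not $[v_0,v_i]$---and the ``inductive iteration along the path'' is not spelled out. Note that what you are trying to establish at this step (that conjugation by $v_0$ preserves $H$) is equivalent to $H$ being normal in $A_L$, which together with $A_L=\langle H,v_0\rangle$ already forces $A_L/H$ to be cyclic surjecting onto $\Z$, hence $H=H_L$; so this one step carries the entire content of the proposition. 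It \emph{can} be done along your lines: the clean inductive statement is that $w(xy^{-1})w^{-1}\in H$ for every vertex $w$ and every edge $\{x,y\}$, proved by induction on $d(w,x)$ by writing $w=(wz^{-1})z$ for a neighbor $z$ of $w$ closer to $x$ and using $wz^{-1}\in S$. But that argument is not in your sketch.

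The paper sidesteps all of this with one extra observation. You showed $xy^{-1}\in H$ for arbitrary vertices $x,y$; the paper shows the stronger fact $x^ay^{-a}\in H$ for \emph{every} integer $a$: along a path $x=z_0,\dots,z_k=y$ adjacent vertices commute, so $(z_iz_{i+1}^{-1})^a=z_i^az_{i+1}^{-a}$ and the product telescopes to $x^ay^{-a}$. With this in hand, any $h=x_1^{b_1}\cdots x_n^{b_n}\in H_L$ decomposes directly as $\prod_{i=1}^{n-1} x_i^{a_i}x_{i+1}^{-a_i}$ with $a_i=b_1+\cdots+b_i$ --- no base vertex, no Schreier transversal, no commutator bookkeeping. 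Incidentally, the same lemma dispatches your Reidemeister--Schreier generators in one line: $v_0^kv_iv_0^{-k-1}=(v_0^kv_i^{-k})(v_i^{k+1}v_0^{-k-1})$.
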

\begin{proof}
    Since $L$ is connected, the set $S$ generates elements $xy^{-1}$ for any two vertices $x, y \in V(L)$: If $x, y$ are vertices with a path $x = x_0, x_1, \ldots, x_{n-1}, x_n = y$, then
    $$xy^{-1} = (x_0x_1^{-1})(x_1x_2^{-1}) \cdots (x_{n-2}x_{n-1}^{-1})(x_{n-1}x_n^{-1}) \in \left< S \right>.$$
    
    Let $h \in H_L$ and let $h = x_1^{b_1} \cdot \ldots \cdot x_n^{b_n}$ for $x_1, \ldots, x_n \in V(L)$ with $b_1 + \ldots + b_n = 0$. We start with a path from $x_1$ to $x_2$ consisting of $x_1 = y_1, y_2, \ldots, y_{n-1}, y_n = x_2$. Then as adjacent vertices commute in $H_L$ we have
    $$x_1^{b_1}x_2^{-b_1} = (y_1y_2^{-1})^{b_1} (y_2y_3^{-1})^{b_1} \cdots (y_{n-1}y_n^{-1})^{b_1} \in \left< S \right>.$$
    
    Let $a_n = \sum_{i=1}^n b_i$. Similarly we have $x_i^{a_i}x_{i+1}^{-a_i} \in \left< S \right>$ for every $1 \le i \le n-1$.
    Then
    $$h = x_1^{b_1} \cdot \ldots \cdot x_n^{b_n} = (x_1^{a_1}x_2^{-a_1})(x_2^{a_2}x_3^{-a_2}) \cdots (x_{n-1}^{a_{n-1}}x_n^{-a_{n-1}}) \in \left< S \right>$$
    and $H_L$ is generated by $S$.
\end{proof}

\begin{prop}\label{prop: bestvina brady conjugates generate everything}
    Let $L$ be a finite connected flag complex with automorphism group $Q$ such that every oriented edge $(x, y)$ lies in a triangle with vertices $x, y, z$ and there is a $q \in Q$ such that $q.(x, z) = (y, z)$. Then the conjugates of $Q$ in the semidirect product $M = H_L \rtimes Q$ generate the entire group $M$.
\end{prop}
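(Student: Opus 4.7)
Write $H \coloneqq \langle \bigcup_{m \in M} m Q m^{-1} \rangle$ for the subgroup of $M$ generated by all conjugates of $Q$. Since $M = H_L \rtimes Q$, every element of $M$ factors as a product of an element of $H_L$ with an element of $Q$, and since $Q \subseteq H$ trivially, it suffices to prove $H_L \subseteq H$. By \cref{prop: bestvina brady generators}, the group $H_L$ is generated by $S = \{x y^{-1} : \{x, y\} \in \edge{L}\}$ (using connectedness of $L$), so the entire claim reduces to showing $(xy^{-1}, 1) \in H$ for every edge $\{x, y\}$ of $L$.

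To produce such elements, fix an edge $\{x, y\}$ and apply the hypothesis to the oriented edge $(x, y)$: there exist a vertex $z$ completing a triangle on $\{x, y, z\}$ and a flag automorphism $q \in Q$ satisfying $q.x = y$ and $q.z = z$. The element $h \coloneqq xz^{-1}$ lies in $\ker \phi = H_L$ because $\phi(x) = \phi(z) = 1$, so $(h, 1) \in M$ is a legitimate conjugator. The semidirect-product multiplication gives
\[
(h, 1)(1, q)(h, 1)^{-1} = (h \cdot q.h^{-1},\, q),
\]
and since $q$ acts on $H_L$ by the induced group automorphism, $q.h^{-1} = q.(z x^{-1}) = (q.z)(q.x)^{-1} = z y^{-1}$. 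Therefore $h \cdot q.h^{-1} = xz^{-1} \cdot zy^{-1} = xy^{-1}$, so $(h, 1)(1, q)(h, 1)^{-1} = (xy^{-1}, q)$. Right-multiplying by $(1, q^{-1}) \in Q \subseteq H$ then yields $(xy^{-1}, 1) \in H$, as required.

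There is no real technical obstacle here; everything boils down to the telescoping identity $xz^{-1} \cdot zy^{-1} = xy^{-1}$. The role of the triangle hypothesis is precisely to furnish a vertex $z$ fixed by some automorphism $q$ sending $x$ to $y$, so that the conjugator $h = xz^{-1}$ both lies in $H_L$ (hence is available inside $M$) and cancels correctly against $q.h^{-1}$ to leave behind exactly the generator $xy^{-1}$ of $H_L$.
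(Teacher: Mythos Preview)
Your proof is correct and follows essentially the same approach as the paper: reduce to the generators $(xy^{-1},1)$ via \cref{prop: bestvina brady generators}, conjugate $(1,q)$ by $(xz^{-1},1)$ using the triangle vertex $z$ fixed by $q$, and then strip off the $Q$-component with $(1,q^{-1})$. The computations and the logical structure match the paper's argument exactly.
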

\begin{proof}
    Recall that the conjugate of $Q$ by an element $(h, q)$ is
    $$\{ (h \cdot (p.h^{-1}), p) : p \in Q \}.$$
    
    By \cref{prop: bestvina brady generators}, it is sufficient to show that the conjugates of $Q$ generate the elements $(xy^{-1}, 1)$ for edges $\{x, y\} \in E(L)$. Let $\{x, y\}$ be an edge. By assumption there is a triangle with vertices $x, y, z$ and a $q \in Q$ such that $q.(x, z) = (y, z)$. In particular, $q.x = y$ and $q.z = z$. Then
    $$xz^{-1} \cdot (q. zx^{-1}) = xz^{-1} zy^{-1} = xy^{-1}$$
    and the element $(xy^{-1}, q) \in H_L \rtimes Q$ lies in a conjugate of $Q$. We get
    $$(xy^{-1}, q) \cdot (1, q^{-1}) = (xy^{-1} \cdot (q.1), qq^{-1}) = (xy^{-1}, 1)$$
    which shows that the conjugates of $Q$ generate $S$ and therefore $H_L$. Thus, they generate the entire group $M = H_L \rtimes Q$.
\end{proof}

\begin{thm}\label{thm: bestvina brady finiteness properties inheritance}
    Let $L$ be a finite connected flag complex such that every edge lies inside a triangle. Assume that the automorphism group $Q$ of $L$ acts transitively on oriented edges of $L$. Take $M = H_L \rtimes Q$ to be acting on $X = \factortext{M}{Q}$ and let $N = \Sym(3)$. Then $\universal(M, N)$ is a simple non-discrete tdlc group with the same finiteness properties as $H_L$.
\end{thm}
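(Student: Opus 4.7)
The plan is to verify that the pair $(M,Q)$ fits into the framework of \cref{ex: simple non-discrete tdlc group} and then to translate the finiteness properties of $\universal(M,N)$ down to $H_L$. Since $L$ is finite, $Q = \aut(L)$ is a finite group, and since $H_L$ is a subgroup of the right-angled Artin group $A_L$, it is torsion-free. Hence the two conditions required in the example are (a) that $\bigcap_{m\in M} mQm^{-1}$ is trivial, and (b) that the conjugates of $Q$ in $M$ generate $M$. Once both hold, the example produces a simple non-discrete tdlc Smith group $\universal(M,N)$ with the finiteness properties of $M$, and a short-exact-sequence argument will then let us pass from $M$ to $H_L$.

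For (a), the strategy is to invoke \cref{prop: semidirect product conjugates intersect trivially}. The only nontrivial input is that $Q$ acts faithfully on $H_L$. I would check this by passing to the abelianization of $A_L$, which is free abelian on $V(L)$: if $q \in Q$ fixes every generator $xy^{-1} \in S$ of $H_L$ (see \cref{prop: bestvina brady generators}), then in the abelianization $q$ preserves $x - y$ for every edge $\{x,y\}$. Using connectedness of $L$, this forces $q(v) - v$ to be a common constant $c$ for all vertices $v$; but $q$ permutes $V(L)$ so $\sum_v (q(v) - v) = 0$, forcing $c = 0$ and $q = \id$. For (b), I would apply \cref{prop: bestvina brady conjugates generate everything} directly: its hypothesis that every oriented edge $(x,y)$ lies in a triangle $\{x,y,z\}$ with some $q \in Q$ sending $(x,z)$ to $(y,z)$ is implied by the assumption that every edge of $L$ lies in a triangle and $Q$ acts transitively on oriented edges of $L$.

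With (a) and (b) in hand, \cref{ex: simple non-discrete tdlc group} applies: $M \le \Sym(X)$ is a transitive closed permutation group with compact (in fact finite) point stabilizers, $N = \Sym(3)$ is transitive, closed and generated by its point stabilizers, and $\universal(M,N)$ is a simple non-discrete tdlc group by \cref{prop: smith properties}. To identify the finiteness properties, I would apply \cref{main thm: smith finite}: its hypotheses are met (both $M$ and $N$ have finitely many, in fact one, orbit, and both are transitive with compact point stabilizers), so $\universal(M,N)$ is of type $\fin_n$ (resp.\ $\FP_n$ over $\Z$) if and only if both $M$ and $N$ are. Since $N = \Sym(3)$ is finite, it is of type $\fin_\infty$ and $\FP_\infty$, so the finiteness properties of $\universal(M,N)$ coincide with those of $M$.

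Finally, I would transfer the finiteness properties from $M$ to $H_L$ using the short exact sequence $1 \to H_L \to M \to Q \to 1$ of tdlc (indeed discrete) groups, where $Q$ is finite and hence of type $\fin_\infty$ and $\FP_\infty$. Applying \cref{prop: basic properties of finiteness properties} in both directions shows that $M$ and $H_L$ share all finiteness properties, which combined with the previous paragraph completes the proof. I do not anticipate a major obstacle here; the only subtle verification is the faithfulness of the $Q$-action on $H_L$ used in step (a), but the abelianization argument above handles it cleanly.
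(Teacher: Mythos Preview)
Your approach matches the paper's almost exactly: verify the two conjugate conditions via \cref{prop: semidirect product conjugates intersect trivially} and \cref{prop: bestvina brady conjugates generate everything}, invoke \cref{ex: simple non-discrete tdlc group} together with \cref{main thm: smith finite}, and then pass from $M$ to $H_L$. Your explicit check that $Q$ acts faithfully on $H_L$ is in fact more careful than the paper, which applies \cref{prop: semidirect product conjugates intersect trivially} without verifying that hypothesis; your abelianization argument fills this in correctly.

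There is, however, a small gap in your final step. You need both directions of ``$M$ is of type $\fin_n$ (resp.\ $\FP_n$) iff $H_L$ is'', but the short-exact-sequence implications recorded in \cref{prop: basic properties of finiteness properties} only give you one of them from $1 \to H_L \to M \to Q \to 1$: knowing that $H_L$ and $Q$ are of type $\FP_n$ yields $M$ of type $\FP_n$, while the second bullet concludes something about the \emph{quotient} $Q$, not the kernel $H_L$. So ``applying \cref{prop: basic properties of finiteness properties} in both directions'' does not actually recover $H_L$ of type $\FP_n$ from $M$ of type $\FP_n$. The paper avoids this by simply noting that $H_L$ has finite index in $M$ (since $Q$ is finite) and invoking the standard fact that finiteness properties pass up and down through finite-index subgroups. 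Replace your short-exact-sequence argument with that observation and the proof goes through.
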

\begin{proof}
    As Bestvina--Brady groups are torsion-free, the conjugates of $Q$ in $M$ intersect trivially by \cref{prop: semidirect product conjugates intersect trivially}. By \cref{prop: bestvina brady conjugates generate everything}, the conjugates of $Q$ in $M$ generate the entire group $M$. Therefore, $\universal(M, N)$ is a simple non-discrete tdlc group which shares finiteness properties with $M$. As $H_L$ is a finite-index subgroup of $M$, they have the same finiteness properties.
\end{proof}

\begin{ex}
    Using Bestvina--Brady groups we can construct simple non-discrete tdlc groups that are of type $\fin_n$ but not of type $\fin_{n+1}$. For $n \ge 2 $ take $L$ to be the flag triangulation of the $n$-dimensional sphere $S^n$ given by the boundary of the $(n+1)$-dimensional cross-polytope. Let $Q$ be the automorphism group of $L$. For $n \ge 2$ the flag complex $L$ is simply connected. The sphere $S^n$ is $(n-1)$-connected but not $n$-connected. By \cref{prop: bestvina brady theorem}, the Bestvina--Brady group $H_L$ is of type $\fin_n$ but not of type $\fin_{n+1}$. As $n \ge 2$, every edge of $L$ lies inside a triangle. Furthermore, $Q$ acts transitively on oriented edges of $L$. By \cref{thm: bestvina brady finiteness properties inheritance}, $\universal(H_L \rtimes Q, \Sym(3))$ is a simple non-discrete tdlc group of type $\fin_n$ but not of type $\fin_{n+1}$.
\end{ex}

\begin{ex}\label{ex: simple non-discrete tdlc group FP_2 not compactly presented}
    For our last example, we want to construct a simple non-discrete tdlc group that is of type $\FP_2$ over $\Z$ but not compactly presented. For the Bestvina--Brady construction we need a finite connected flag complex $L$ satisfying the conditions of \cref{thm: bestvina brady finiteness properties inheritance} such that $L$ is not simply connected but is homologically $1$-connected over $\Z$.

    For the explicit construction of $L$ see the GAP code in the Appendix. There we also check the desired properties. For the theoretical background on this construction, we refer the reader to \cite{bridson+haefliger}*{Chapter III.$\mathcal{C}$}.
    
    Let $Q = \operatorname{PSL}_2(13) \times C_3 \times C_3$ which has three subgroups $V_1, V_2, V_3$ each isomorphic to $C_{13} \rtimes C_3$. We construct a graph $\Gamma$ as follows: The vertices of $\Gamma$ are the left cosets of $V_1, V_2, V_3$ in $Q$. The vertices $V_1$ and $xV_2$ are adjacent if there is a $y \in V_2$ such that $xy^{-1} \in V_1$. Analogously we get edges for $V_1, xV_3$ and $V_2, xV_3$. The left action by $Q$ on the cosets gives us the remaining adjacencies. Let $L$ be the triangle complex given by its $1$-skeleton $\Gamma$. Then $L$ is in fact a flag complex which is homologically $1$-connected over $\Z$ such that every edge lies inside a triangle. Furthermore, $Q$ acts on $L$ by automorphisms and transitively on oriented edges. Also, every closed loop in the link of any vertex has length at least $6$.

    The property not shown in the Appendix is that $L$ is not simply connected. This follows from a covering space argument: Every closed loop in the link of any vertex of $L$ has length at least $6$. As $L$ is a triangle complex, it follows from \cite{bridson+haefliger}*{Chapter II, Proposition 5.25} that $L$ has non-positive curvature. By the Cartan--Hadamard theorem the universal cover $\Tilde{L}$ is a $\operatorname{CAT}(0)$ space. By \cite{bridson+haefliger}*{Chapter II, Proposition 5.10}, $L$ has the geodesic extension property, so does its universal cover $\Tilde{L}$ which contains a geodesic line by \cite{bridson+haefliger}*{Chapter II, Proposition 5.8}. However, $L$ is bounded and so does not contain a geodesic line. Hence, $L$ and $\Tilde{L}$ are not homeomorphic and $L$ is not simply connected.

    As $L$ is homologically $1$-connected over $\Z$ but not simply connected, the Bestvina--Brady group $H_L$ is of type $\FP_2$ over $\Z$ but is not finitely presented by \cref{prop: bestvina brady theorem}. Let $M = H_L \rtimes \aut(L)$ be acting on $X = \factortext{M}{\aut(L)}$. Because every edge of $L$ lies inside a triangle and $\aut(L)$ acts transitively on oriented edges, by \cref{thm: bestvina brady finiteness properties inheritance} the Smith group $\universal(M, \Sym(3))$ is a simple non-discrete tdlc group which is of type $\FP_2$ over $\Z$ but is not compactly presented.
\end{ex}

\begin{rem}
    Bestvina--Brady groups give us one way to construct simple non-discrete tdlc groups with certain finiteness properties. However, one can find simple non-discrete tdlc groups separated by finiteness properties using other constructions.

    \begin{enumerate}
        \item Let $M$ be a group with a finite non-trivial subgroup $Q$. As both the intersection $\bigcap_{m \in M} m Q m^{-1}$ and the subgroup generated by the conjugates $mQm^{-1}$ are normal subgroups of $M$, the conditions on the conjugates of $Q$ are fulfilled if $M$ is simple. In this case, $\universal(M, \Sym(3))$ is a simple non-discrete tdlc group with the same finiteness properties as $M$. Taking $M$ to be one of the groups constructed in \cite{skipper+witzel+zaremsky}*{Theorem 7.1}, $\universal(M, \Sym(3))$ is of type $\fin_{n-1}$ but not of type $\fin_n$.

        \item Let $M = \SL_n(\F_q[t, t^{-1}])$ for coprime $n \ge 3$ and $q-1$. Consider its finite subgroup $Q = \SL_n(\F_q)$. As $n$ and $q-1$ are coprime, the intersection $\bigcap_{m \in M} m Q m^{-1}$ is trivial. By the fact that the special linear group over a Euclidean ring is generated by elementary matrices, see \cite{hahn+omeara}*{1.2.11}, it follows that the conjugates $m Q m^{-1}$ generate the entire group $M$. We leave the details as an exercise for the reader. By \cite{witzel}*{Theorem 3.35}, $M$ is of type $\fin_{2n-3}$ but not of type $\fin_{2n-2}$. Thus, $\universal(M, \Sym(3))$ is a simple non-discrete tdlc group with the same finiteness properties.

    \end{enumerate}
\end{rem}

\section{Acknowledgments}
We thank Thomas Titz Mite for constructing the complex used in \cref{ex: simple non-discrete tdlc group FP_2 not compactly presented}. The second author thanks his advisor Stefan Witzel for many fruitful discussions and for his guidance.

\section{Appendix}

\begin{verbatim}
LoadPackage("GRAPE");
LoadPackage("HAP");
LoadPackage("SIMPCOMP");

F := FreeGroup(["a1", "a2", "b1", "b2", "c1", "c2"]);
AssignGeneratorVariables(F);

local_rels_A := [ a1^3, a2^13, 
    a2*a1^-1*a2^-1*a1*a2^2, (a2*a1^-1)^3, (a2^-1*a1^-1)^3 ];
local_rels_B := [ b1^3, b2^13, 
    b2*b1^-1*b2^-1*b1*b2^2, (b2*b1^-1)^3, (b2^-1*b1^-1)^3 ];
local_rels_C := [ c1^3, c2^13, 
    c2*c1^-1*c2^-1*c1*c2^2, (c2*c1^-1)^3, (c2^-1*c1^-1)^3 ];
triangle_rels :=
    [ a1*b1*c1, a2*b2*c2, a1^2*a2^2*b1^2*b2^2*c1^2*c2^2,
    a1^2*a2^5*b1^2*b2^5*c1^2*c2^5, a1*a2^9*b1*b2^9*c1*c2^9 ];
pi := F/Concatenation(local_rels_A, 
local_rels_B, local_rels_C, triangle_rels);

d0 := DerivedSubgroup(pi);
phi0 := IsomorphismFpGroup(d0);
d1 := Image(phi0);
phi1 := IsomorphismSimplifiedFpGroup(d1);
d2 := Image(phi1);
phi2 := IsomorphismSimplifiedFpGroup(d2);
d3 := Image(phi2);
psi := GQuotients(d3, PSL(2,13))[1];
k_in_d3 := Kernel(psi);
k := PreImage(phi0*phi1*phi2, k_in_d3);
theta := NaturalHomomorphismByNormalSubgroup(pi,k);
q := Image(theta);
# StructureDescription(q) -> PSL(2, 13) x C_3 x C_3

V1 := Group([q.1,q.2]);
V2 := Group([q.3,q.4]);
V3 := Group([q.5,q.6]);
# StructureDescription(V1) -> C_13 : C_3

ConstructQuotientSkeleton := function()
    local cosets1, cosets2, cosets3, vertices, relation;
    cosets1 := RightCosets(q, V1);
    cosets2 := RightCosets(q, V2);
    cosets3 := RightCosets(q, V3);
    vertices := Concatenation(cosets1, cosets2, cosets3);
    relation := function(v,w)
        local x,y,u1,u2;
        x := Representative(v);
        y := Representative(w);
        u1 := v*x^-1;
        u2 := w*y^-1;
        if not u1=u2 then
            return ForAny(w, z->z*x^-1 in u1);
        else 
            return false;
        fi; 
    end;
    return Graph(q,vertices, OnRight, relation, true);
end;
gamma := ConstructQuotientSkeleton();

# The following triangles are given by CompleteSubgraphs(gamma, 3),
# By CompleteSubgraphs(gamma) these are maximal complete subgraphs.
# Therefore we get a flag complex.
reps_triangles :=
    [ [ 1, 253, 505 ], [ 76, 253, 505 ], [ 123, 253, 505 ],
    [ 163, 253, 505 ], [ 166, 253, 505 ], [ 189, 253, 505 ] ];
triangles := Concatenation(Orbits(gamma.group,
    reps_triangles, OnSets));
complex := SimplicialComplex(triangles);
complex2 := SCFromFacets(triangles);
# gamma, complex and complex2 describe the same triangle complex.

# Check Aut(gamma) acts transitively on oriented edges.
Size(Orbits(AutGroupGraph(gamma), DirectedEdges(gamma), OnTuples));

# Check complex is homologically 1-connected.
SCIsConnected(complex2);
h1 := Homology(complex,1);

# Check every closed loop in the link of
# every vertex has length at least 6.
Adj := Adjacency(gamma, 1);
LinkGraph := InducedSubgraph(gamma, Adj);
Girth(LinkGraph);
\end{verbatim}

  \bibliography{lit}
  \bibliographystyle{plain}

\end{document}